\documentclass[11pt]{amsart}

\usepackage{amssymb}
\usepackage{amsmath}
\usepackage{amsthm}
\usepackage[all]{xy}

\theoremstyle{plain}
\newtheorem{thm}{Theorem}[section]
\newtheorem{prop}[thm]{Proposition}
\newtheorem{lem}[thm]{Lemma}

\theoremstyle{definition}
\newtheorem{defn}[thm]{Definition}

\theoremstyle{remark}
\newtheorem{rem}[thm]{Remark}

\DeclareMathOperator{\mult}{mult}

\DeclareMathOperator{\Proj}{Proj}

\DeclareMathOperator{\vol}{vol}

\DeclareMathOperator{\lex}{lex}

\def\N{\mathbb{N}}
\def\Z{\mathbb{Z}}
\def\Q{\mathbb{Q}}
\def\R{\mathbb{R}}
\def\C{\mathbb{C}}

\def\r+{\mathbb{R}_{\geq 0}}

\def\ep{\varepsilon}

\def\r+{{\R}_{\geq 0}}
\def\P{\mathbb{P}}
\def\rarw{\rightarrow}

\def\w.{W_{\bullet}}
\def\wdel{W_{\Delta,\bullet}}
\def\*c{\C^{\times}}

\def\rp{{\R}_{+}}
\def\zp{{\Z}_{+}}
\def\mb{\mathbf{m}}

\newcommand{\calo}{\mathcal {O}}

\newcommand{\m}{\mathfrak{m}}

\begin{document}

\title{Okounkov bodies and Seshadri constants}
\author{Atsushi Ito}
\address{Graduate School of Mathematical Sciences, 
the University of Tokyo, 3-8-1 Komaba, Meguro-ku, Tokyo 153-8914, Japan.}
\email{itoatsu@ms.u-tokyo.ac.jp}

\begin{abstract}
Okounkov bodies,
which are closed convex sets defined for big line bundles,
have rich information on the line bundles.
On the other hand,
Seshadri constants are invariants which measure the positivity
of line bundles. 
In this paper,
we prove that Okounkov bodies give lower bounds of Seshadri constants.  
\end{abstract}

\subjclass[2010]{14C20;14M25}
\keywords{Okounkov body, Seshadri constant, graded linear series}

\maketitle


\section{Introduction}

In this paper,
we investigate a relation between
Okounkov bodies and Seshadri constants.

\vspace{3mm}

Okounkov bodies were introduced by Lazarsfeld and Musta\c{t}\u{a} \cite{LM}
and independently by Kaveh and Khovanskii \cite{KK},
based on the work of Okounkov \cite{Ok1,Ok2}.
First, recall the definition of Okounkov bodies.

Let $X$ be a variety of dimension $n$,
$z=(z_1,\ldots,z_n)$ a local coordinate system at a smooth point $p \in X$,
and $>$ a monomial order on $\N^n$.
Then we obtain a valuation
\[
\nu=\nu_{z,>}: \calo_{X,p} \setminus \{0\} \rightarrow \N^n
\]
as follows;
for $f \in  \calo_{X,p} \setminus \{0\}$,
we expand it as a formal power series
\[
f=\sum_{u \in \N^n} c_u z^u
\]
and set
\[
\nu(f) := \min \{ \, u \, | \, c_u \not = 0\},
\]
where the minimum is taken with respect to the monomial order $>$.

Let $L$ be a big line bundle on a projective variety $X$.
Then we can define the Okounkov body $\Delta(L)$ of $L$ as follows.

Fix an isomorphism $L_p \cong \calo_{X,p}$.
Then this isomorphism naturally induces $L^{ k}_p \cong \calo_{X,p}$ for any $k \geq 0$,
and we have the map
\[
H^0(X,kL) \setminus \{0\} \hookrightarrow L^{ k}_p \setminus \{0\} 
\cong \calo_{X,p} \setminus \{0\} \stackrel{\nu}{\to} \N^n.
\]
We write $\nu(H^0(X,kL)) \subset \N^n$ for the image of $H^0(X,kL) \setminus \{0\}$ by this map.
Note that $\nu(H^0(X,kL))$ does not depend on the choice of the isomorphism $L_p \cong \calo_{X,p}$
because $\nu$ maps any unit element
in $\calo_{X,p} $ to $0 \in \N^n$.
Then
\[
\Gamma_{L}=\Gamma_{L,z,>} := \bigcup_{k \in \N} \{k\} \times \nu(H^0(X,kL)) \subset \N \times \N^n
\]
is a semigroup by construction.
We define the \textit{Okounkov body} of $L$ with respect to $z$ and $>$ by
\begin{align*}
\Delta(L)=\Delta_{z,>}(L) :  &=  \text{closed convex hull} \, \Big(\bigcup_{k \geq 1} \frac1k  \nu(H^0(X,kL)) \Big) \\
&=  \Sigma (\Gamma_{L}) \cap (\{1\} \times \R^n),
\end{align*}
where $\Sigma(\Gamma_{L})$ is the closed convex cone spanned by $\Gamma_{L}$.

More generally,
we can define the Okounkov body for a graded linear series.
That is,
for a graded linear series $\w.$ associated to a line bundle $L$ on a (not necessarily projective) variety $X$,
we set
\[
\Gamma_{\w.}=\Gamma_{\w.,z,>} := \bigcup_{k \in \N} \{k\} \times \nu(W_k) \subset \N \times \N^n.
\]
We define the Okounkov body of $\w.$ with respect to $z$ and $>$
by $\Delta(\w.)=\Delta_{z,>} (\w.):=\Sigma (\Gamma_{\w.,z,>}) \cap (\{1\} \times \R^n)$.

\begin{rem}
In \cite{LM},
they use ``admissible flags'' instead of local coordinate systems.
Essentially, there is no difference if $>$ is the lexicographic order.
Local coordinate systems are used in \cite{BC} for instance.
\end{rem}

\vspace{2mm}
On the other hand,
Demailly \cite{De} defined an interesting invariant, Seshadri constant, 
which measures the local positivity of an ample line bundle at a point.
Seshadri constants relate to
jet separation of adjoint bundles,
Ross-Thomas' slope stabilities of polarized varieties \cite{RT},
Gromov width (an invariant in symplectic geometry) \cite{MP}, and so on.
Nakamaye \cite{Na} defined Seshadri constants at very general points 
for big line bundles.
For a detailed treatment of Seshadri constants,
we refer the reader to \cite[Chapter 5]{La} or \cite{BDH+}.

\vspace{1.5mm}
We extend the notion of Seshadri constants slightly.
Usually,
Seshadri constants are defined in a numerical way.
However,
we adopt an equivalent definition in terms of jet separation
since we treat graded linear series in this paper.

Let $\w.$ be a ``birational'' graded linear series associated to a line bundle $L$ on a variety $X$
(see Definition \ref{def of birational} for the definition of a birational graded linear series).
Using jet separation (cf.\ \cite[Theorem 6.4]{De},
\cite[Proposition 6.6]{ELMNP}),
we define the \textit{Seshadri constant} $\ep(\w.;1) $ of $\w.$ at a very general point by
\[
\ep(\w.;1) := \sup_{k > 0} \frac{s(W_k;1)}{k} \in \rp \cup \{+ \infty\},
\]
where $s(W_k;1)$ is the supremum of integers $s \geq -1$
such that the natural map
$ W_k  \hookrightarrow H^0(X,L^{ k})  \rarw L^{ k} \otimes \calo_X / \m_p^{s+1}$ is surjective
for a very general point $p \in X$.
When $L$ is a big line bundle on a projective variety $X$ and $\w.=\{ H^0 (X, kL)\}_k$,
it is easy to show $\w.$ is birational.
Hence we can consider the Seshadri constant of $\w.$ at a very general point,
which we denote by $\ep(L;1)$ or $\ep(X,L;1)$.

To relate Seshadri constants with Okounkov bodies,
we introduce an invariant for a convex set.
For an integral polytope $\Delta \subset \R^n$ of dimension $n$,
we can consider an invariant $\ep(\Delta;1):=  \ep(X_{\Delta},L_{\Delta};1) $,
where $(X_{\Delta},L_{\Delta})$ is the polarized toric variety corresponding to $\Delta$.
To generalize this invariant to an $n$-dimensional convex set $\Delta$,
we introduce a ``monomial'' birational graded linear series $W_{\Delta,\bullet}$ associated to $\calo_{(\*c)^n}$ on $(\*c)^n$
(see Definition \ref{def of monomial g.l.s.}).
Thus we can define $\ep(\Delta;1):= \ep((\*c)^n,W_{\Delta,\bullet};1)$.

\vspace{2mm}
In this paper, we show the following theorem,
which states that Okounkov bodies give lower bounds on Seshadri constants.

\begin{thm}[Special case of Theorem \ref{main thm}]\label{main of special case}
Let $X$ be a projective variety of dimension $n$,
and fix a local coordinate system $z=(z_1,\ldots,z_n)$ on $X$ at a smooth point
and a monomial order $>$ on $\N^n$.
For a big line bundle $L$ on $X$,
it holds that
$\ep(L;1) \geq \ep(\Delta_{z,>}(L);1)$.
\end{thm}

\vspace{1mm}
We explain the idea of the proof of Theorem \ref{main of special case}
when $L$ is ample.
The strategy is to compare two graded linear series
$\{ H^0(X,kL) \}_k$ and $W(\Gamma_L)_{\bullet} := \{  \bigoplus_{u \in \nu(H^0(X,kL))} \C x^u  \}_k $.
Note that $ \bigoplus_{u \in \nu(H^0(X,kL))} \C x^u \subset \C[\N^n]$ is nothing but
the degree $k$-th part of the graded ring $\C[\Gamma_L]$.

If $\Gamma_L$ is finitely generated,
Anderson \cite{An} showed that $\bigoplus_{k} H^0(X,kL)$ degenerates to $\C[\Gamma_L]$.
In other words,
$(X,L)$ degenerates to the (not necessarily normal) toric variety $(\Proj \C[\Gamma_L],\calo(1))$.
Since Seshadri constants have a lower semicontinuity,
we have $\ep(X,L;1) \geq \ep(\Proj \C[\Gamma_L],\calo(1);1) $.
It is easy to check that $\ep(\Proj \C[\Gamma_L],\calo(1);1)=\ep(X_{\Delta(L)},L_{\Delta(L)};1) =\ep(\Delta(L);1)$,
and Theorem \ref{main of special case} follows in this case.

Unfortunately,
 $\Gamma_L$ is not finitely generated in general.
Hence we cannot use the degeneration of the section ring $\bigoplus_{k} H^0(X,kL)$.
Instead, we degenerate linear series as in \cite{CM},
that is,
we degenerate $H^0(X,kL)$ to $W(\Gamma_L)_k=\bigoplus_{u \in \nu(H^0(X,kL))} \C x^u$ for each $k$ separately.
Then we can show $s(H^0(X,kL);1) \geq s(W(\Gamma_L)_k;1)$
and compare the Seshadri constants of $L$ and $W(\Gamma_L)_{\bullet}$.

Thus even if we show Theorem \ref{main of special case}  only for an ample line bundle $L$,
we have to treat the graded linear series $W(\Gamma_L)_{\bullet}$. 
This is the reason why we extend the notion of Seshadri constants to graded linear series.

\vspace{3mm}
The definition of Seshadri constants can be easily generalized to the multi-point case
(cf.\  \cite{La},\cite{BDH+}).
That is,
we can define the Seshadri constants $\ep(\w.;\mb)$ and $\ep(\Delta;\mb)$
for a weight $\mb=(m_1,\ldots,m_r) \in \rp^r$.

Theorem \ref{main of special case} is generalized
to the multi-point case and birational graded linear series as follows.

\begin{thm}[=Theorem \ref{main thm}]\label{main}
Let $\w.$ be a birational graded linear series associated to a line bundle $L$
on an $n$-dimensional variety $X$.
Fix a local coordinate system $z=(z_1,\ldots,z_n)$ on $X$ at a smooth point
and a monomial order $>$ on $\N^n$.
Then $\ep(\w.;\mb) \geq \ep(\Delta_{z,>}(\w.);\mb)$
holds for any $r \in \zp$ and $\mb \in \rp^r$.
\end{thm}

\vspace{2mm}
This paper is organized as follows. In Section 2, we recall some notations and conventions.
In Section 3, we define Seshadri constants of graded linear series and investigate basic properties.
In Section 4, we study Seshadri constants of monomial graded linear series.
In Section 5, we give the proof of Theorem \ref{main}.
Throughout this paper,
we consider varieties or schemes over the complex number field $\C$.

\subsection*{Acknowledgments}
The author wishes to express 
his gratitude to his supervisor Professor Yujiro Kawamata
for his valuable advice, comments and warm encouragement.
He is grateful to Professors Robert Lazarsfeld and Yasunari Nagai
for many valuable comments.
He wishes to thank to Professor Marcin Dumnicki
for answering his questions about references.
He would also like to thank Yoshinori Gongyo, Makoto Miura, Yusuke Nakamura, Shinnosuke Okawa, Taro Sano, and Yusaku Tiba
for helpful discussions and comments.
The author was supported by the Grant-in-Aid for Scientific Research
(KAKENHI No. 23-56182) and the Grant-in-Aid for JSPS fellows.


\section{Notations and conventions}

We denote by $\N,\Z,\Q,\R$, and $\C$ the set of all
natural numbers, integers, rational numbers, real numbers, and complex numbers respectively.
In this paper, $\N$ contains $0$.
Set $\zp = \N \setminus 0$, $\rp =  \{ x \in \R \, | \, x > 0 \}$.
For a real number $t \in \R$,
the round up of $t$ is denoted by $\lceil t \rceil \in \Z$.

For a subset $S \subset \R^n$, we denote by $\Sigma (S)$ the closed convex cone spanned by $S$.
For $t \in \R$,
we set $tS=\{ \, tu \, | \, u \in S \}$.
For another subset $S' \subset \R^n$,
$S+S'=\{u+u'  \, | \, u \in S, u' \in S'\}$ is the Minkowski sum of $S$ and $S'$.
For simplicity of notation,
we denote $S+(-S')$ by $S-S'$.
For $u' \in \R^n$,
$S+u'=\{u + u' \, | \, u \in S \}$ is
the parallel translation of $S$ by $u'$.

For a convex set $\Delta \subset \R^n$, 
the \textit{dimension} of $\Delta$ is the dimension of the affine space spanned by $\Delta$.
We denote by $\Delta^{\circ}$ the interior of $\Delta$. 

A subset $P \subset \R^n$ is called a \textit{polytope} if it is the convex hull of a finite set in $\R^n$.
A polytope $P$ is \textit{integral}
if all its vertices are contained in $\Z^n$.

For a variety $X$, we say a property holds at a \textit{general} point of $X$
if it holds for all points in the complement of a proper algebraic subset.
A property holds at a \textit{very general} point of $X$
if it holds for all points in the complement of the union of countably many proper subvarieties.

Throughout this paper, a divisor means a Cartier divisor.
Thus we use the words ``divisor'', ``line bundle'', and ``invertible sheaf\,'' interchangeably. 
For divisors $D$ and $D'$,
the inequality $D \geq D'$ means $D-D'$ is effective.


\section{Seshadri constants of graded linear series}

In this section,
we define Seshadri constants of graded linear series.

\begin{defn}\label{def of jets separation}
Let $L$ be a line bundle on a (not  necessarily projective) variety $X$,
and $W$  a subspace of $H^0(X,L)$.
For $r \in \zp$ and $\mb =(m_1,\ldots,m_r) \in \Z^r$,
we say that $W$ \textit{separates $\mb$-jets} at smooth $r$ points $p_1,\ldots,p_r$ in $X$
if the natural map
\[
\ W \rightarrow  L  \big/  \Big(L \otimes \bigotimes_{i=1}^r \m_{p_i}^{m_i+1}\Big)
=\bigoplus_{i=1}^r L / \m_{p_i}^{m_i+1} L
\]
is surjective,
where we regard $\m_{p_i}^{m_i+1}=\calo _X$ for $m_i \leq -1$.
We say $W$ \textit{generically separates $\mb$-jets}
if $W$ separates $\mb$-jets at general $r$ points in $X$.
By definition,
any $W$ generically separates $\mb$-jets
if $m_i \leq -1$ for any $i$.

For $W \subset H^0(X,L)$ and $\mb=(m_1,\ldots,m_r) \in \rp^r$,
we define $s(W;\mb) \in \R \cup \{+ \infty\}$ to be
\[
s(W;\mb) = \sup \{ \, t \in \R \, | \, W \ \text{generically separates $ \lceil t \mb \rceil $-jets}\} ,
\]
where $ \lceil t\mb \rceil = (\lceil tm_1 \rceil,\ldots,\lceil tm_r \rceil)$.
When $\dim W < + \infty$, we have
\[
s(W;\mb) = \max \{ \, t \in \R \, | \, W \ \text{generically separates $ \lceil t \mb \rceil $-jets} \} \in \R.
\]
When $W=H^0(X,L)$,
we denote $s(W,\mb)$ by $s(L;\mb)$.
\end{defn}

Suppose $X$ is a variety of dimension $n$
and $L$ is a line bundle on $X$.
Let $\w.=\{W_k\}_{k \in \N}$ be a \textit{graded linear series} associated to $L$,
i.e., $W_k$ is a subspace of $H^0(X,kL)$ for any $k \geq 0$ with $W_0=\C$,
such that
$\bigoplus_{k \geq 0} W_k$ is a graded subalgebra of the section ring $\bigoplus_{k \geq 0} H^0(X,kL)$. 
When all $W_k$ are finite dimensional,
$\w.$ is called \textit{of finite dimensional type}. 

\begin{defn}\label{def of birational}
A graded linear series $\w.$ on a variety $X$ is \textit{birational} if
the function field $K(X)$ of $X$ is generated by $\{ \, f / g \in K(X) \, | \, f,g \in W_k, g \not = 0  \}$ over $\C$
for any $k \gg 0$.
When $\w.$ is of finite dimensional type,
this is clearly equivalent to the condition that
the rational map defined by $|W_k|$ is birational onto its image for any $k \gg 0$,
which is Condition (B) in \cite[Definition 2.5]{LM}.  
\end{defn}

Now we define Seshadri constants of graded linear series.

\begin{defn}\label{def of sc}
Let $\w.$ be a birational graded linear series associated to a line bundle $L$ on a variety $X$.
For $\mb=(m_1,\ldots,m_r) \in \rp^r$,
we define the \textit{Seshadri constant} of $\w.$ at very general points with weight $\mb$ to be
\[
\ep(X,\w.;\mb)=\ep(\w.;\mb):=\sup_{k > 0} \dfrac{s(W_k;\mb)}k \in \rp \cup \{+\infty\}.
\]
Note that $s(W_k;\mb) > 0$ holds for $k \gg0$ by the birationality of $\w.$.

When $W_k=H^0(X,kL)$ for any $k$,
we denote it by $\ep(X,L;\mb)$ or $\ep(L;\mb)$.
\end{defn}

\begin{rem}
The definition of Seshadri constants by jet separation is due to Theorem 6.4 in \cite{De}.
See also \cite{La} or \cite{ELMNP}.
We treat the definition by blowing ups in Lemmas \ref{usual def} and \ref{def by resolution}.
\end{rem}

\begin{rem}\label{rem_change of bundles}
By definition,
$s(W;\mb) \leq s(W';\mb)$ holds for
subspaces $W \subset W'$ in $H^0(X,L)$.
Thus $\ep(\w.;\mb) \leq \ep(\w.^{'};\mb)$ holds for $\w.,\w.^{'}$ associated to $L$
if $W_k \subset W_k^{'}$ for any $k$
(we write $\w. \subset \w.^{'}$ for such $\w.,\w.^{'}$).

For an injection $L \hookrightarrow L'$ between line bundles on $X$
and a subspace $W \subset H^0(X,L)$,
$s(W;\mb)$ does not change if we regard $W$ as a subspace of $H^0(X,L')$
because we consider jet separation at general points.
Hence $\ep(\w.;\mb )$ does not change for $\w.$ associated to $L$
if we consider that $\w. $ is associated to $L'$. 

Let $\pi:X' \rarw X$ be a birational morphism
and $\w.$ a graded linear series associated to $L$ on $X$.
Then we can consider that $\w.$ is a graded linear series on $X'$ associated to $\pi^*L$
by the natural inclusion $H^0(X,kL) \subset H^0(X',k \pi^*L)$.
By a similar reason to above,
we have $\ep(X,\w.;\mb)=\ep(X',\w.;\mb)$.
\end{rem}

By the following lemma,
we may assume that $\w.$ is of finite dimensional type in many cases
when we prove properties of $\ep(X,\w.;\mb)$.

\begin{lem}\label{increasing seq of g.l.s.}
Let $\w.$ be a birational graded linear series.
Suppose that $ W_{1,\bullet} \subset W_{2,\bullet} \subset \cdots \subset  W_{l,\bullet} \subset \cdots \subset \w.$
is an increasing sequence of birational graded linear series in $\w.$
such that $ W_k = \bigcup_{l =1}^{\infty} W_{l,k}$ for each $k$.
Then it holds that $\ep(\w.;\mb) = \sup_l  \ep(W_{l, \bullet};\mb)
= \lim_l  \ep(W_{l, \bullet};\mb)$.
\end{lem}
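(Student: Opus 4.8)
The plan is to prove the two inequalities separately; the inequality $\sup_l \ep(W_{l,\bullet};\mb) \le \ep(\w.;\mb)$ is formal, and essentially all the content lies in the reverse one. First I would record monotonicity. Since $W_{l,\bullet} \subset W_{l+1,\bullet} \subset \w.$ for every $l$, Remark \ref{rem_change of bundles} gives
$$\ep(W_{l,\bullet};\mb) \le \ep(W_{l+1,\bullet};\mb) \le \ep(\w.;\mb).$$
Thus $\{\ep(W_{l,\bullet};\mb)\}_l$ is non-decreasing, so $\sup_l \ep(W_{l,\bullet};\mb) = \lim_l \ep(W_{l,\bullet};\mb)$, and this common value is at most $\ep(\w.;\mb)$. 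It remains to prove $\ep(\w.;\mb) \le \sup_l \ep(W_{l,\bullet};\mb)$.

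Next, unwinding $\ep(\w.;\mb) = \sup_{k>0} j(W_k;\mb)/k$, I claim it suffices to show that for each fixed $k$ one has $j(W_k;\mb) = \sup_l j(W_{l,k};\mb)$. Granting this, since $k>0$ is fixed the supremum over $l$ passes through the division by $k$, and interchanging the two suprema yields
$$\ep(\w.;\mb) = \sup_{k>0}\sup_l \frac{j(W_{l,k};\mb)}{k} = \sup_l \sup_{k>0}\frac{j(W_{l,k};\mb)}{k} = \sup_l \ep(W_{l,\bullet};\mb),$$
which finishes the proof.

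The heart of the argument is therefore the equality $j(W_k;\mb) = \sup_l j(W_{l,k};\mb)$ for fixed $k$. The inequality $\ge$ reversed (i.e. $\sup_l j(W_{l,k};\mb) \le j(W_k;\mb)$) is immediate from $W_{l,k} \subset W_k$ together with Remark \ref{rem_change of bundles}. For the remaining inequality, fix any real $t < j(W_k;\mb)$. Because $\mb \in \rp^r$ has positive entries, the jet target for $\lceil t\mb\rceil$ is a quotient of that for $\lceil t'\mb\rceil$ whenever $t \le t'$, so the set of parameters for which $W_k$ generically separates $\lceil t\mb\rceil$-jets is downward closed; hence $W_k$ does generically separate $\lceil t\mb\rceil$-jets. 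Writing $\mb' = \lceil t\mb\rceil$, by the remark following Definition \ref{def of jets separations} there are smooth points $p_1,\ldots,p_r \in X$ at which $W_k$ separates $\mb'$-jets, i.e. the natural map
$$W_k \rarw \bigoplus_{i=1}^r kL/\m_{p_i}^{m_i'+1}\,kL$$
is surjective. Here the target is finite-dimensional, since at a smooth point of an $n$-dimensional variety $\calo_X/\m_{p_i}^{m_i'+1}$ has dimension $\binom{m_i'+n}{n}$ and $kL$ is locally free of rank one. Choosing finitely many elements of $W_k$ whose images span this target and using that $W_k = \bigcup_l W_{l,k}$ is an increasing union, all of these elements lie in a single $W_{l_0,k}$ for $l_0 \gg 0$; therefore $W_{l_0,k}$ already separates $\mb'$-jets at $p_1,\ldots,p_r$, so it generically separates $\lceil t\mb\rceil$-jets and $j(W_{l_0,k};\mb) \ge t$. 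Letting $t \uparrow j(W_k;\mb)$ gives $\sup_l j(W_{l,k};\mb) \ge j(W_k;\mb)$.

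I expect the main obstacle to be precisely this passage from separation by the infinite union $W_k$ to separation by a single term $W_{l_0,k}$: it rests on the finite-dimensionality of the jet target and requires fixing, via the remark, one common configuration of points $p_1,\ldots,p_r$ before invoking the increasing union. A secondary point needing care is the reduction of the real parameter $t$ to the integer jet vector $\lceil t\mb\rceil$ through monotonicity of jet separation in $\mb$. The case $j(W_k;\mb) = +\infty$ requires no separate treatment, as the same argument produces, for arbitrarily large $t$, an index $l_0$ with $j(W_{l_0,k};\mb) \ge t$.
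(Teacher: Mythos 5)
Your proof is correct and follows essentially the same route as the paper: monotonicity via Remark \ref{rem_change of bundles} gives one direction, and for the other you fix $k$ and $t < j(W_k;\mb)$ and use the increasing union $W_k = \bigcup_l W_{l,k}$ to transfer generic separation of $\lceil t\mb\rceil$-jets to a single $W_{l_0,k}$. The only difference is that you spell out the step the paper leaves implicit (finite-dimensionality of the jet target, a finite spanning set landing in one $W_{l_0,k}$, and the remark that separation at some points implies generic separation), which is exactly the right justification.
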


\begin{proof}
Since $\ep(W_{l, \bullet};\mb)$ is monotonically increasing,
$\lim_l  \ep(W_{l, \bullet};\mb)$ exists.
The inequalities $\ep(\w.;\mb) \geq \sup_l  \ep(W_{l, \bullet};\mb)
\geq \lim_l  \ep(W_{l, \bullet};\mb)$ are clear.
Thus it is enough to show $\ep(\w.;\mb) \leq \lim_l  \ep(W_{l, \bullet};\mb)$.

Fix $k$ and a real number $t < s(W_k;\mb)$.
By definition,
$W_k$ generically separates $\lceil t\mb \rceil$-jets.
Hence $W_{l,k}$ also generically separates $\lceil t\mb \rceil$-jets
for $l \gg 0$ by the assumption $W_k = \bigcup_l W_{l,k}$.
Thus it holds that $s(W_k;\mb)/ k = \lim_l s(W_{l,k};\mb) /k  \leq \lim_l \ep(W_{l,\bullet};\mb)$.
By the definition of $\ep(\w.;\mb)$,
we have $\ep(\w.;\mb) \leq \lim_l  \ep(W_{l, \bullet};\mb)$.
\end{proof}

In Definition \ref{def of sc},
$\ep(\w.;\mb)$ is defined by the supremum,
but in fact it is the limit.

\begin{lem}\label{sup=lim for sc}
In Definition \ref{def of sc},
$\ep(\w.;\mb) = \lim \dfrac{s(W_k;\mb)}k$ holds.

\end{lem}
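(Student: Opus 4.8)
The plan is to establish that the sequence $a_k := j(W_k;\mb)$ is superadditive, i.e.
$$ j(W_{k+k'};\mb) \geq j(W_k;\mb) + j(W_{k'};\mb) \qquad \text{for all } k,k' \in \zp, $$
and then to invoke Fekete's lemma, which asserts that $a_k/k$ converges to $\sup_{k>0} a_k/k$ as $k \to \infty$ whenever $(a_k)$ is superadditive. Since $\ep(\w.;\mb) = \sup_{k>0} a_k/k$ by Definition \ref{def of sc}, this at once gives $\ep(\w.;\mb) = \lim_k a_k/k$. The content is thus concentrated in the superadditivity estimate; the passage from superadditivity to the equality of supremum and limit is routine, and one only checks separately the harmless case where some $a_k$ equals $+\infty$, in which both sides are $+\infty$ (note $a_{nk} \geq n\,a_k$).

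To prove superadditivity I would first record the multiplicativity of jet separation: if $W \subset H^0(X,L)$ separates $\overline{a}$-jets at smooth points $p_1,\ldots,p_r$ and $W' \subset H^0(X,L')$ separates $\overline{b}$-jets at the \emph{same} points, then the image $W\cdot W'$ of the multiplication map into $H^0(X,L\otimes L')$ separates $(\overline{a}+\overline{b})$-jets at $p_1,\ldots,p_r$. Working one point $p_i$ at a time in local coordinates and a trivialization, the hypotheses let me choose, for every monomial $z^\gamma$ with $|\gamma| \leq a_i+b_i$, a decomposition $\gamma=\alpha+\beta$ with $|\alpha|\leq a_i$, $|\beta|\leq b_i$, and sections $s_\alpha \in W$, $s'_\beta \in W'$ congruent to $z^\alpha$, $z^\beta$ modulo $\m_{p_i}^{a_i+1}$, $\m_{p_i}^{b_i+1}$ respectively; then $s_\alpha s'_\beta \in W\cdot W'$ has lowest-order term $z^\gamma$ at $p_i$. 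A triangular (Nakayama-type) descending induction on the $\m_{p_i}$-adic order then shows that these products span $\calo_{X,p_i}/\m_{p_i}^{a_i+b_i+1}$, giving surjectivity. In the several-point case one additionally arranges, using the simultaneous separation of $W$ and $W'$ at all $r$ points, that $s_\alpha$ and $s'_\beta$ vanish to the required order at the remaining $p_j$, so that the products are supported at $p_i$; this yields surjectivity onto the full direct sum $\bigoplus_i \calo/\m_{p_i}^{a_i+b_i+1}$.

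With this lemma in hand, fix real numbers $t < j(W_k;\mb)$ and $t' < j(W_{k'};\mb)$. Then $W_k$ generically separates $\lceil t\mb\rceil$-jets and $W_{k'}$ generically separates $\lceil t'\mb\rceil$-jets, each away from a proper closed subset, so there is a common general $r$-tuple of points at which both hold. Since $\bigoplus_k W_k$ is a graded subalgebra we have $W_k\cdot W_{k'} \subset W_{k+k'}$, so by the multiplicativity lemma $W_{k+k'}$ separates $(\lceil t\mb\rceil + \lceil t'\mb\rceil)$-jets at these points. From $\lceil tm_i\rceil + \lceil t'm_i\rceil \geq \lceil (t+t')m_i\rceil$, together with the fact that separating jets of a given order implies separating jets of any lower order, I conclude that $W_{k+k'}$ generically separates $\lceil (t+t')\mb\rceil$-jets, hence $j(W_{k+k'};\mb) \geq t+t'$. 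Letting $t \uparrow j(W_k;\mb)$ and $t' \uparrow j(W_{k'};\mb)$ gives the claimed superadditivity.

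The main obstacle is the multiplicativity lemma for jet separation, and within it the bookkeeping that upgrades "realizing every leading monomial'' to honest surjectivity of the evaluation map onto $\bigoplus_i \calo/\m_{p_i}^{a_i+b_i+1}$ — this is where the order filtration and the elimination of higher-order correction terms, as well as the simultaneous control at all $r$ points, must be handled carefully. Everything else, namely the choice of a common set of general points and the final Fekete limit, is straightforward.
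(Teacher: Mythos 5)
Your multiplication lemma for jet separation is correct \emph{when all the jet orders $a_i,b_i$ are nonnegative}, and its proof (the triangular induction on $\m$-adic order at one point, plus forcing vanishing at the other points) is exactly the mechanism behind Claim \ref{claim1} in the paper. The genuine gap is the next step: superadditivity $j(W_{k+k'};\mb) \geq j(W_k;\mb)+j(W_{k'};\mb)$ for \emph{all} $k,k'\in\zp$ is false. Birationality constrains $W_k$ only for $k \gg 0$; for small $k'$ the space $W_{k'}$ may be zero or one-dimensional, in which case $j(W_{k'};\mb)<0$ or $=0$, and your argument collapses there: for $t'<0$ the statement ``$W_{k'}$ generically separates $\lceil t'\mb\rceil$-jets'' is vacuous, no section of $W_{k'}$ congruent to a unit can be chosen, and indeed $W_k\cdot W_{k'}$ may be zero. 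Concretely, on $X=\proj^2$ take $W_0=\C$, $W_1=0$, $W_2=H^0(\calo_{\proj^2}(2))$, $W_3=\C\,x_0^3$, and $W_k=H^0(\calo_{\proj^2}(k))$ for $k\geq 4$. This is a birational graded linear series, and for $r=1$, $\mb=1$ one computes $j(W_1;1)=-1$, $j(W_2;1)=2$, $j(W_3;1)=0$, so $j(W_3;1)<j(W_2;1)+j(W_1;1)$. Since Fekete's lemma requires superadditivity for every pair of indices, the proof as written does not go through; your treatment of the ``harmless'' $+\infty$ case does not touch this issue, which lives at the opposite end (negative or undefined jet separation at small indices).

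The argument is repairable, and the repair is essentially what the paper does. Your multiplication argument does yield $j(W_{k+k'};\mb)\geq j(W_k;\mb)+j(W_{k'};\mb)$ whenever both terms on the right are nonnegative, and $j(W_k;\mb)>0$ for all $k\gg0$ by birationality; a Fekete-type argument for sequences superadditive on $\{k \,:\, j(W_k;\mb)\geq 0\}$ (fix $k_0$ there, write $k=qk_0+s$ with $s$ ranging over a bounded set of large indices, and use $j(W_k;\mb)\geq q\, j(W_{k_0};\mb)+j(W_s;\mb)$) gives the limit statement, the finitely many indices with negative $j$ being irrelevant to both the supremum and the limit. The paper avoids the trap differently: after reducing to finite dimensional type it proves only the supermultiplicativity $j(W_{kl};\mb)\geq l\cdot j(W_{k};\mb)$ along multiples of a fixed index (Claim \ref{claim1}), and handles an arbitrary $k=k_0l+N+i$ by multiplying $W_{k_0l}$ with one fixed nonzero section $s_{N+i}\in W_{N+i}$; this padding requires only $W_{N+i}\neq 0$ (guaranteed for $N\gg0$), not any jet separation from $W_{N+i}$, which is precisely the point your superadditivity claim overlooks.
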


\begin{proof}
By Lemma
\ref{increasing seq of g.l.s.},
we may assume that $\w.$ is of finite dimensional type.
To prove this lemma,
it suffices to show that a sequence $\{ s(W_k;\mb)\}_k$ is superadditive,
i.e.,
\[
s(W_{k+l};\mb) \geq s(W_k;\mb) + s(W_l;\mb)
\]
holds for $k,l >0$ if $  s(W_k;\mb), s(W_l;\mb) \geq 0$.

For simplicity, we set $s_k=s(W_k;\mb)$ in this proof.
We prove the superadditivity only when $r=1$,
and write $m \in \rp$ instead of $\mb$.
When $r > 1$,
the proof is essentially the same.
Hence we leave the details to the reader.

Fix a very general point $p \in X$.
For each $k,i \geq 0$, set
\[
W_{k,i}=W_{k} \cap H^0(X,L^{ k} \otimes \m_{p}^i) \subset H^0(X,L^{ k}).
\]
For any $s \in \N$,
it is easy to show that
\[
W_{k} \rightarrow L^{ k}\otimes \calo _X/\m_{p}^{s+1}
\]
is surjective
if and only if so is
\[
W_{k,i} \rightarrow L^{ k} \otimes \m_{p}^i/\m_{p}^{i+1}
\]
for each $i \in \{0,1,\ldots,s \}$.

Fix an integer $0 \leq i \leq  \lceil  s_k m \rceil + \lceil  s_l m \rceil$.
Then there exist integers $0 \leq i_1  \leq \lceil s_k m \rceil , 0 \leq i_2  \leq \lceil s_l m \rceil $ such that $i=i_1+ i_2$.
Consider the following diagram
\[\xymatrix{
W_{k,i_1} \otimes W_{l,i_2}
\ar[d] \ar[r]^(0.32){\alpha} \ar@{}[dr]|{\circlearrowleft} 
&L^{ k} \otimes \m_{p}^{i_1}/\m_{p}^{i_1+1} \otimes L^{ l} \otimes \m_{p}^{i_2}/\m_{p}^{i_2+1}  \ar[d]^{\beta} \\
W_{k+l,i} \ar[r]^(0.4){\gamma} & L^{ k+l} \otimes \m_{p}^i/\m_{p}^{i+1}. \\
}\]
In this diagram,
$\alpha$ is surjective because $i_1  \leq \lceil s_k m \rceil , i_2  \leq \lceil s_l m \rceil $,
and $\beta$ is clearly surjective.
Hence $\gamma$
is also surjective for any $i \in \{0,1,\ldots,\lceil  s_k m \rceil + \lceil  s_l m \rceil\}$.
Thus $W_{k+l}$ generically separates $\lceil  s_k m \rceil + \lceil  s_l m \rceil$-jets,
which means
\[
s_{k+l} \geq (\lceil  s_k m \rceil + \lceil  s_l m \rceil)/m \geq  s_k + s_l . \qedhere
\]
\end{proof}

Many properties of Seshadri constants of ample line bundles 
also hold for graded linear series.

\begin{lem}\label{easy property of sc}
For a birational graded linear series $\w.$ (resp.\ $W_{\bullet}^{'}$)
associated to a line bundles $L$ (resp.\  $L'$) on a variety $X$
and $\mb \in \rp^r$,
it holds that
\begin{itemize}
\item[(1)] $\ep(\w.^{(l)};\mb)=l \cdot \ep(\w.;\mb)$ for $l \in \zp$,
where $\w.^{(l)}$ is the graded linear series associated to $L^{ l}$
defined by $W_k^{(l)} : =W_{kl}$. 
\item[(2)] $\ep(\w.;t\mb)=t^{-1}\ep(\w.;\mb)$ for $t  \in \rp$.
\item[(3)] $\ep(W_{\bullet}^{''};\mb) \geq \ep(\w.;\mb)+\ep(W_{\bullet}^{'};\mb)$,
where $W_{\bullet}^{''}$ is the graded linear series associated to $L \otimes L'$
defined by $W_k^{''}:= \text{the image of } \\
W_k \otimes W_k^{'} \rightarrow H^0(X,k(L \otimes L'))$.
\item[(4)] $\ep(\w.;\mb) \leq
\sqrt[n]{{\vol(\w.)} / |\mb|_n}$,
where $n=\dim X$, $\vol(\w.)=\lim_k \dfrac{\dim W_k}{k^n / n!}  $,
and $|\mb|_n=\sum_{i=1}^r m_i^n$.
\end{itemize}
\end{lem}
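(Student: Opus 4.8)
The plan is to prove the four parts in order, since parts (1)--(3) are essentially formal consequences of the definitions of jet separation and the Seshadri constant, while part (4) is the genuinely substantive statement requiring a volume count.

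For part (1), the key observation is that $W^{(l)}_k = W_{kl}$, so $j(W^{(l)}_k;\mb) = j(W_{kl};\mb)$ as these are the same vector space (viewed as sections of the appropriate bundle; by Remark \ref{rem_change of bundles} the identification $L^{\otimes kl}$ versus $(L^{\otimes l})^{\otimes k}$ does not affect $j$). Thus
\[
\ep(\w.^{(l)};\mb) = \sup_{k>0} \frac{j(W_{kl};\mb)}{k}
= l \cdot \sup_{k>0} \frac{j(W_{kl};\mb)}{kl}.
\]
Using Lemma \ref{sup=lim for sc} to pass to limits, the right-hand supremum over multiples $kl$ agrees with the supremum over all denominators, giving $l \cdot \ep(\w.;\mb)$. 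For part (2), I would unwind the definition of $j(W;t\mb)$: generically separating $\lceil s \cdot t\mb\rceil$-jets is the same condition as generically separating $\lceil (st)\mb\rceil$-jets, so $j(W;t\mb) = t^{-1} j(W;\mb)$ directly from the defining supremum over the scaling parameter, and this scaling passes through the $\sup_{k}$ defining $\ep$.

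For part (3), the superadditivity, the main point is a multiplication argument on jets. First I would fix very general points $p_1,\dots,p_r$ at which both $W_k$ separates $\lceil t\mb\rceil$-jets and $W'_k$ separates $\lceil t'\mb\rceil$-jets. The image $W''_k$ of $W_k \otimes W'_k$ then separates $\lceil t\mb\rceil + \lceil t'\mb\rceil$-jets at those points, by the same commutative-square surjectivity argument used in Claim \ref{claim1}: the product of a section vanishing to order $m_i$ with one vanishing to order $m'_i$ vanishes to order $m_i + m'_i$, and taking graded pieces $\m_{p_i}^a/\m_{p_i}^{a+1}$ one checks surjectivity component by component. Hence $j(W''_k;\mb) \geq j(W_k;\mb) + j(W'_k;\mb)$ after dividing by $k$ and taking suprema (here I would again invoke Lemma \ref{sup=lim for sc} so that the supremum of a sum is handled as a limit of sums).

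The hard part will be part (4), the volume upper bound, which is where an actual dimension count enters rather than formal manipulation. The plan is as follows: if $\w.$ generically separates $\lceil t\mb\rceil$-jets, then the surjection $W_k \to \bigoplus_{i=1}^r L^{\otimes k}/\m_{p_i}^{\lceil tm_i\rceil + 1}$ forces $\dim W_k \geq \sum_{i=1}^r \binom{\lceil tm_i \rceil + n}{n}$, since each local quotient $L^{\otimes k}/\m_{p_i}^{a+1}$ has dimension $\binom{a+n}{n}$ at a smooth $n$-dimensional point. Taking $t$ close to $j(W_k;\mb)$ and using $\binom{a+n}{n} \sim a^n/n!$ for large $a$, I would deduce
\[
\dim W_k \gtrsim \frac{1}{n!} \sum_{i=1}^r (j(W_k;\mb)\, m_i)^n
= \frac{j(W_k;\mb)^n}{n!}\, |\mb|_n.
\]
Dividing by $k^n/n!$, letting $k \to \infty$, and recalling $\vol(\w.) = \lim_k \dim W_k / (k^n/n!)$ together with $\ep(\w.;\mb) = \lim_k j(W_k;\mb)/k$ from Lemma \ref{sup=lim for sc}, I obtain $\vol(\w.) \geq \ep(\w.;\mb)^n \, |\mb|_n$, which rearranges to the claimed bound $\ep(\w.;\mb) \leq \sqrt[n]{\vol(\w.)/|\mb|_n}$. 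The delicate points to handle carefully are the asymptotics of the binomial coefficient (ensuring the lower-order terms wash out in the limit) and the simultaneous passage to the limit in both $j(W_k;\mb)/k$ and $\dim W_k/(k^n/n!)$; these are routine but must be organized so that the inequality survives the limit.
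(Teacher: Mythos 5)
Your proposal is correct and follows essentially the same route as the paper: parts (1)--(2) by the same formal manipulations through Lemma \ref{sup=lim for sc}, part (3) by reducing to the per-$k$ inequality $j(W_k^{''};\mb) \geq j(W_k;\mb)+j(W_k^{'};\mb)$ via the Claim \ref{claim1}-style graded-piece multiplication argument, and part (4) by the same dimension count $\dim W_k \geq \sum_i \binom{\lceil t m_i\rceil + n}{n}$ coming from the jet-separation surjection. The only immaterial difference is the order of limits in (4): the paper fixes $t < \ep(\w.;\mb)$ uniformly in $k$, lets $k \to \infty$, and then $t \to \ep(\w.;\mb)$, whereas you let $t \to j(W_k;\mb)$ for each fixed $k$ first and then pass to $k \to \infty$; both organizations yield $\vol(\w.) \geq \ep(\w.;\mb)^n |\mb|_n$.
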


\begin{proof}
By Lemma \ref{sup=lim for sc}, we have
\begin{align*}
\ep(\w.^{(l)};\mb)&= \lim_k \dfrac{s(W_k^{(l)};\mb)}k\\
&= \lim_k \dfrac{s(W_{kl};\mb)}k\\
&= l \cdot \lim_k \dfrac{s(W_{kl};\mb)}{kl}\\
&= l \cdot \lim_k \dfrac{s(W_k;\mb)}k=l \cdot \ep(\w.;\mb).
\end{align*}
Hence (1) is shown.

We can show (2) easily from the definition and the following.
\begin{align*}
s(W_k;t\mb) &= \sup \{ \, s \in \R \, | \, W_k \ \text{generically separates $ \lceil st \mb \rceil $-jets} \, \} \\
          &= t^{-1} \sup \{ \, s \in \R \, | \, W_k \ \text{generically separates $ \lceil s \mb \rceil $-jets} \, \}\\
          &= t^{-1} s(W_k;\mb).
\end{align*}

To prove (3),
it suffices to show $s(W_k^{''};\mb) \geq s(W_k;\mb)+s(W_k^{'};\mb)$ for any $k$.
We can show this
by the argument similar to the proof of Lemma \ref{sup=lim for sc}.
We leave the details to the reader.

To show (4),
we fix a positive number $0 < t < \ep(\w.;\mb)$.
By Lemma \ref{sup=lim for sc},
the inequality $s(W_k;\mb) > kt$ holds for any $k \gg 0$.
Thus $W_k$ separates $\lceil kt \mb \rceil$-jets for any $k \gg 0$.
In other words,
\[
W_k \rarw \bigoplus_i L \otimes \calo/\mathfrak{m}_{p_i}^{\lceil ktm_i \rceil +1}
\]
is surjective for very general $p_1,\ldots,p_r$.
This surjection induces
\begin{align*}
\dim W_k & \geq  \dim \bigoplus_i L \otimes \calo/\mathfrak{m}_{p_i}^{\lceil ktm_i \rceil +1} \\
&= \sum_i {\lceil ktm_i \rceil+n \choose n}=\sum_i \dfrac{t^n m_i^n}{n!} k^n + O(k^{n-1}).
\end{align*}
Thus we have $\vol (\w.) \geq t^n |\mb|_n$ by $k \rarw +\infty$.
We finish the proof by $t \rarw \ep(\w.;\mb)$.
\end{proof}

Definition \ref{def of sc}
is a natural generalization of the well-known definition of the Seshadri constant (at very general points)
for a nef and big line bundle (cf. \cite[Theorem 5.1.17]{La}).

\begin{lem}\label{usual def}
For a nef and big line bundle $L$ on a projective variety $X$
and $\mb=(m_1,\ldots,m_r) \in \rp^r$,
it holds that
\[
\ep(X,L;\mb) = \max \{ \, t \geq 0 \, | \,  \mu^{*}L- t \, \sum_{i=1}^{r}m_i E_i \ \text{\rm{is nef}} \,  \},
\]
where $\mu: \widetilde{X} \rightarrow X$ is the blowing up along very general $r$ points on $X$
and $E_1,\ldots,E_r$ are the exceptional divisors.
\end{lem}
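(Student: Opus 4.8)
The plan is to prove the two inequalities separately, after translating the right-hand side into the more flexible languages of curves and of vanishing on the blow-up $\widetilde X$. Write $\sigma$ for the nef threshold on the right. Since nef-ness is tested on irreducible curves and, for a strict transform $\widetilde C$, one has $\mu^*L\cdot\widetilde C=L\cdot\mu_*\widetilde C$ and $E_i\cdot\widetilde C=\mult_{p_i}(\mu_*\widetilde C)$, while the classes $\widetilde C\subset E_i$ only impose $t\ge 0$, I would first record that $\sigma=\inf_C \frac{L\cdot C}{\sum_i m_i\,\mult_{p_i}C}$, the infimum running over irreducible curves $C\subset X$ meeting $\{p_1,\dots,p_r\}$. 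Using Remark \ref{rem_change of bundles} and Lemma \ref{def by resolution} I would also reduce to the case $X$ smooth (very general points are smooth points of $X$, as $\Sing X$ is a proper closed subset), so that $\widetilde X$ is smooth and Kawamata--Viehweg vanishing applies.

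For $\ep(X,L;\mb)\le\sigma$, fix $t<\ep(X,L;\mb)$ and an irreducible curve $C$. By Lemma \ref{sup=lim for sc} there is a $k$ with $j(H^0(X,kL);\mb)/k>t$, so $H^0(X,kL)$ generically separates $\lceil tk\mb\rceil$-jets, and hence separates them at our very general points. A general member $\{s=0\}$ of the (nonempty) system of sections of $kL$ vanishing to order $\ge a_i:=\lceil tkm_i\rceil$ at every $p_i$ does not contain $C$, and the elementary bound on intersection multiplicities $\mult_{p_i}(\{s=0\}\cap C)\ge a_i\,\mult_{p_i}C$ gives $k(L\cdot C)=\{s=0\}\cdot C\ge \sum_i a_i\,\mult_{p_i}C\ge tk\sum_i m_i\,\mult_{p_i}C$. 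Dividing by $k$, letting $t\to\ep(X,L;\mb)$, and taking the infimum over $C$ yields $\ep(X,L;\mb)\le\sigma$.

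For $\ep(X,L;\mb)\ge\sigma$, which I expect to be the crux, fix rational $t<t''<\sigma$, so that $\mu^*L-t''\sum_i m_iE_i$ is nef. It is moreover big: a nef class has nonnegative top self-intersection, and with $E_i^n=(-1)^{n-1}$ one computes $(\mu^*L-t''\sum_i m_iE_i)^n=\vol(L)-(t'')^n|\mb|_n$, which for $t''<\sigma$ is strictly positive (it exceeds the value at $t=\sigma$), so the nef class is big; this is consistent with the bound in Lemma \ref{easy property of sc}(4). Setting $a_i=\lceil tmm_i\rceil$ and using $K_{\widetilde X}=\mu^*K_X+(n-1)\sum_i E_i$, I would write $\mu^*(mL)-\sum_i(a_i+1)E_i=K_{\widetilde X}+N_m$ with $N_m=\mu^*(mL-K_X)-\sum_i(a_i+n)E_i$. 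Since $a_i+n=tmm_i+O(1)$, for $m\gg0$ the class $N_m$ is a large positive multiple of $\mu^*L-t''\sum_i m_iE_i$ up to bounded corrections, hence nef and big. Kawamata--Viehweg vanishing then gives $H^1(\widetilde X,\mu^*(mL)-\sum_i(a_i+1)E_i)=0$, and the exact sequence for the thickened exceptional divisor $\sum_i(a_i+1)E_i$ shows that $H^0(X,mL)$ separates $(a_1,\dots,a_r)$-jets at $p_1,\dots,p_r$. Thus $j(H^0(X,mL);\mb)\ge tm$ for $m\gg0$, so $\ep(X,L;\mb)\ge t$, and letting $t\to\sigma$ finishes the proof.

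The delicate points, and the reason the second inequality is the main obstacle, are: verifying bigness of $\mu^*L-t''\sum_i m_iE_i$ via the self-intersection computation; absorbing the discrepancy term $(n-1)\sum_iE_i$ and the twist $-\mu^*K_X$ into the nef-and-big estimate for $N_m$; and identifying $H^0$ of the restriction to $\sum_i(a_i+1)E_i$ with the space of $\overline{a}$-jets, so that the vanishing genuinely produces jet separation. The passage from general to our very general points is harmless, since jet separation is an open condition and the blow-up is performed at very general points.
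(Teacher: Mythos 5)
Your proof of the inequality $\ep(X,L;\mb)\le\max\{t\ge 0 \mid \mu^*L-t\sum_i m_iE_i \text{ nef}\}$ is essentially the paper's own argument: jet separation produces, for each $k$, a divisor in $|kL\otimes\bigotimes_i\m_{p_i}^{\lceil tkm_i\rceil}|$ not containing a given curve $C$, and intersecting it with $C$ gives the bound. That half is fine, modulo the same gloss as in the paper on why some member of the system avoids $C$.

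The reverse inequality, however, has a genuine gap, and it sits exactly where the lemma is more general than the classical statement. Your Kawamata--Viehweg argument is the standard proof for an \emph{ample} line bundle on a \emph{smooth} variety, but you apply it directly to a nef and big $L$. The class $N_m=\mu^*(mL-K_X)-\sum_i(a_i+n)E_i$ need not be nef for \emph{any} $m$ in this generality: if $C\subset X$ is a curve with $L\cdot C=0$ and $K_X\cdot C>0$, then $C$ misses the very general points and $N_m\cdot\widetilde C=(mL-K_X)\cdot C=-K_X\cdot C<0$ for every $m$, so KV vanishing never applies. A concrete instance: $X$ the blow-up of $\proj^2$ at four collinear points, $L=4H-E_1-E_2-E_3-E_4$ (nef and big), and $C$ the strict transform of the line, with $L\cdot C=0$ and $K_X\cdot C=1$. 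Relatedly, your claim that $N_m$ is a large multiple of $\mu^*L-t''\sum_i m_iE_i$ ``up to bounded corrections'' is not accurate even for ample $L$: the discrepancies in the $E_i$-coefficients grow like $m(t''-t)m_i$. The ample case can be repaired by writing $N_m=\bigl(\mu^*((m-m_0)L)-\sum_i(a_i+n)E_i\bigr)+\mu^*(m_0L-K_X)$ with $m_0L-K_X$ ample, and noting that lowering the coefficients of $-E_i$ from $(m-m_0)t''m_i$ to $a_i+n$ (keeping them nonnegative) preserves nefness --- but this repair is precisely where ampleness of $L$ enters, and no such repair exists in the nef-and-big case. What is missing is the paper's reduction: prove the ample case first, then choose an effective $E$ with $L-sE$ ample, apply the ample case to $L-sE$, and let $s\to 0$. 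The paper moreover uses Fujita's vanishing theorem instead of KV, which needs neither smoothness nor a canonical divisor, so no reduction to smooth $X$ is required at all. Your own smoothness reduction is doubly problematic: it cites Lemma \ref{def by resolution}, whose proof in the paper relies on the present lemma (so this is circular), and for non-normal $X$ the comparison with a resolution $\pi:X'\to X$ only gives $\ep(X,L;\mb)\le\ep(X',\pi^*L;\mb)$, since sections of $k\pi^*L$ need not descend to $X$; that is the wrong direction for proving $\ep(X,L;\mb)\ge\sigma$.
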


\begin{proof}
The proof is essentially the same as that of \cite[Theorem 5.1.17]{La}.
First,
we show $\ep(X,L;\mb) \leq \max \{ \, t \geq 0 \, | \,  \mu^{*}L- t \, \sum_{i=1}^{r}m_i E_i \ \text{is nef} \,  \} $,
i.e., $\mu^{*}L- \ep(X,L;\mb)  \sum_{i=1}^{r}m_i E_i $ is nef.
Fix a curve $C \subset X$
and let $\widetilde{C} \subset \widetilde{X}$ be the strict transform of $C$.
It is enough to show $ (\mu^{*}L- \ep(X,L;\mb)  \sum_{i=1}^{r}m_i E_i) .\widetilde{C} \geq 0$.
For each $k$,
the line bundle $L^{ k} $ separates $\lceil s_k \mb \rceil$-jets
at very general points $p_1,\ldots,p_r$,
where $s_k:=s(kL;\mb)$.
Hence there exists an effective divisor
$D \in |L^{ k} \otimes \bigotimes_i \mathfrak{m}_{p_i}^{\lceil s_k m_i \rceil} |$
such that $D$ does not contain $C$.
Thus it holds that
\begin{align*}
\mu^{*}L .\widetilde{C}
= L.C 
&= k^{-1} D.C  \\
&\geq k^{-1} \sum_i \mult_{p_i}(D) \cdot \mult_{p_i}(C) \\
&\geq k^{-1} \sum_i \lceil s_k m_i \rceil \mult_{p_i}(C) \\
&\geq k^{-1} \sum_i  s_k m_i \mult_{p_i}(C) 
= k^{-1}  s_k \sum_i m_i E_i.\widetilde{C},
\end{align*}
where $\mult_{p_i}$ is the multiplicity at $p_i$. 
By $k \rarw +\infty $,
we have $\mu^{*}L .\widetilde{C} \geq \ep(X,L;\mb) \sum_{i=1}^{r}m_i E_i.\widetilde{C}$.

\vspace{2mm}
We show the opposite inequality.
First,
assume $L$ is ample.
Let $p_1,\ldots,p_r$ be very general $r$ points in $X$.
Fix a rational number $0 < t = a / b < \max \{ \, t \geq 0 \, | \,  \mu^{*}L- t \, \sum_{i=1}^{r}m_i E_i \ \text{is nef} \,  \} $
with positive integers $a,b$.
Then $b\mu^{*}L- a  \sum_{i=1}^{r}m_i E_i$ is an ample $\R$-line bundle on $\widetilde{X}$.
Multiplying $a$ and $b$ by a sufficiently large positive integer,
we may assume $b\mu^{*}L- \sum_{i=1}^{r} \lceil am_i \rceil E_i$ is ample. 
By Serre's vanishing theorem,
there exists a natural number $N$ such that
\[
H^1(X,kb L \otimes \bigotimes_i \mathfrak{m}_{p_i}^{k \lceil am_i \rceil} )
=H^1(\widetilde{X}, k (b\mu^{*}L- \sum_{i} \lceil am_i \rceil E_i))=0
\]
for every $k \geq N$.
This means $kbL$ separates $(k \lceil am_1 \rceil-1,\ldots,k \lceil am_r \rceil-1)$-jets
at $p_1,\ldots,p_r$,
that is,
\[
\dfrac{ s(kbL;\mb)}{kb} \geq  \min_i \dfrac{k \lceil am_i \rceil-1}{kb m_i} .
\]
By $k \rarw +\infty$,
we have
\[
\ep(L;\mb) \geq \lim_k  \min_i \dfrac{k \lceil am_i \rceil-1}{kb m_i} = \frac{a}{b}=t.
\]
By $t \rarw \max \{ \, t \geq 0 \, | \,  \mu^{*}L- t \, \sum_{i=1}^{r}m_i E_i \ \text{is nef} \,  \} $,
we obtain $\ep(L;\mb) \geq \max \{ \, t \geq 0 \, | \,  \mu^{*}L- t \, \sum_{i=1}^{r}m_i E_i \ \text{is nef} \,  \}$.

Next we show the nef and big case.
Since $L$ is nef and big,
there exists an effective divisor $E$ on $X$
such that $L- sE$ is ample for any $0 < s \ll1$.
Fix a rational number $0 < s \ll1$
and take a sufficiently divisible integer $l \in\zp$ such that $ls \in\Z$.
Then $\ep(l(L-sE);\mb) \leq \ep(lL;\mb)=l \cdot \ep(L;\mb)$ holds
by Remark
\ref{rem_change of bundles} and Lemma
\ref{easy property of sc} (1).
Since $l(L-sE)$ is ample,
we have
\begin{align*}
\ep(l(L-sE);\mb) &= \max \{ \, t \geq 0 \, | \,  \mu^{*}(l(L-sE))- t \, \sum_{i} m_i E_i \ \text{is nef} \,  \} \\
&= l \cdot \max \{ \, t \geq 0 \, | \,  \mu^{*}(L-sE)- t \, \sum_{i} m_i E_i \ \text{is nef} \,  \}.
\end{align*}
Hence
$ \ep(L;\mb) \geq \max \{ \, t \geq 0 \, | \,  \mu^{*}(L-sE)- t \, \sum_{i=1}^{r}m_i E_i \ \text{is nef} \,  \}$
holds.
By $s \rarw 0$,
we have 
$ \ep(L;\mb) \geq \max \{ \, t \geq 0 \, | \,  \mu^{*}L- t \, \sum_{i=1}^{r}m_i E_i \ \text{is nef} \,  \}$.
\end{proof}

For a line bundle $L$ on a projective variety $X$,
$\w.=\{H^0(X,kL)\}_k$ is birational
if and only if $L$ is big.
For a nef but not big line bundle $L$ on $X$,
we define $\ep(X,L;\mb):=\max \{ \, t \geq 0 \, | \,  \mu^{*}L- t \, \sum_{i=1}^{r}m_i E_i \ \text{is nef} \,  \}=0$.

For projective varieties,
we can describe Seshadri constants of graded linear series
by using those of nef line bundles as follows.
This is a simple generalization of (some part of) \cite[Propositions 6.4, 6.6]{ELMNP},
which treat $\ep(L;1)$ for a big line bundle $L$,
although their notations are slightly different.

\begin{lem}\label{def by resolution}
Let $\w.$ be a birational graded linear series associated to a line bundle $L$ on a projective variety $X$.
For each $k > 0$,
set
\[
M_k=\mu_k^{*} (kL) - F_k,
\]
where $\mu_k : X_k \rightarrow X$ is a resolution of the base ideal
\[
\mathfrak{b}_k:= \text{\rm{the image of}} \, \ W_k \otimes L^{-k} \rightarrow \calo_X 
\]
and $\calo_{X_k}(-F_k):=\mu_k^{-1} \mathfrak{b}_k$.
Set $M_k=0$ if $W_k=0$.
Then it holds that
\[
\ep(X,\w.;\mb)=\sup_{k > 0} \dfrac{\ep(X_k,M_k;\mb)}k=\lim_{k > 0} \dfrac{\ep(X_k,M_k;\mb)}k.
\]
\end{lem}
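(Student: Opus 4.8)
The plan is to prove the two equalities as the chain $\ep(\w.;\mb)\le\sup_k \ep(X_k,M_k;\mb)/k=\lim_k \ep(X_k,M_k;\mb)/k\le\ep(\w.;\mb)$. Throughout I write $a_k:=\ep(X_k,M_k;\mb)$, and I note that $M_k$ is base-point-free, hence nef, because $\calo_{X_k}(-F_k)=\mu_k^{-1}\mathfrak{b}_k$ is globally generated by the pulled-back sections of $W_k$; for $k\gg 0$ it is moreover big, since $\w.$ is birational. By Remark \ref{rem_change of bundles} Seshadri constants are unchanged under passing to a further modification, so I may compare the $M_k$ freely on a common resolution; and by Lemma \ref{increasing seq of g.l.s.} I may assume $\w.$ is of finite dimensional type.

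First I would show that the supremum on the right is a limit. The containments $W_k\cdot W_{k'}\subseteq W_{k+k'}$ give $\mathfrak{b}_k\cdot\mathfrak{b}_{k'}\subseteq\mathfrak{b}_{k+k'}$, hence $F_{k+k'}\le F_k+F_{k'}$ on a common resolution and therefore $M_{k+k'}\ge M_k+M_{k'}$ with both summands nef. Since very general points avoid the fixed loci $\Supp F_k$, the curve characterisation of Lemma \ref{usual def} shows that $\ep(\,\cdot\,;\mb)$ is super-additive on nef classes and does not decrease when an effective divisor is added; thus $a_{k+k'}\ge a_k+a_{k'}$, and Fekete's lemma yields $\sup_k a_k/k=\lim_k a_k/k$.

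The inequality $\ep(\w.;\mb)\le\sup_k a_k/k$ is the easy half. Pulling $W_k$ back to $X_k$ and dividing off the fixed part $F_k$ identifies it with a base-point-free subspace $V_k\subseteq H^0(X_k,M_k)$ with $V_k\cong W_k$. Because very general points lie off $\Supp F_k$ and $\mu_k$ is an isomorphism there, jet separation is preserved, so $j(W_k;\mb)=j(V_k;\mb)\le j(H^0(X_k,M_k);\mb)\le a_k$, the last step being the $l=1$ term of $a_k=\sup_l j(H^0(X_k,lM_k);\mb)/l$ (Lemma \ref{usual def}). Dividing by $k$ and applying Lemma \ref{sup=lim for sc} gives $\ep(\w.;\mb)=\sup_k j(W_k;\mb)/k\le\sup_k a_k/k$.

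The reverse inequality $\sup_k a_k/k\le\ep(\w.;\mb)$ is the heart of the matter. Using Lemma \ref{easy property of sc}(1) to replace $\w.$ by $\w.^{(k)}$ (so that $W_1^{(k)}=W_k$ and $\ep(\w.^{(k)};\mb)=k\,\ep(\w.;\mb)$), it suffices to treat $k=1$: for a birational series $U_\bullet$ associated to a line bundle $B$, with $M$ the moving part of $U_1$, prove $\ep(X_1,M;\mb)\le\ep(U_\bullet;\mb)$. Writing $\ep(X_1,M;\mb)=\lim_l j(H^0(X_1,lM);\mb)/l$ (Lemma \ref{usual def}), the space $H^0(X_1,lM)$ is that of sections of $lB$ whose germs lie in the integral closure $\overline{\mathfrak{b}(U_1)^{\,l}}$. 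I would try to bound the jet separation of this complete system of the single moving part $M$ by that provided by $U_\bullet$ at a comparable degree, exploiting $\mathfrak{b}(U_1)^{\,l}\subseteq\mathfrak{b}(U_l)$ together with a Brian\c{c}on--Skoda-type containment $\overline{\mathfrak{b}(U_1)^{\,l}}\subseteq\mathfrak{b}(U_1)^{\,l-n+1}\subseteq\mathfrak{b}(U_{l-n+1})$ near the smooth general points where jets are measured; since $j(U_{l-n+1};\mb)/(l-n+1)\le\ep(U_\bullet;\mb)$ by definition, dividing by $l$ and letting $l\to\infty$ (so $(l-n+1)/l\to 1$) would absorb the loss. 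The main obstacle is precisely this comparison: the complete linear system of the moving part can a priori separate more jets at a very general point than the \emph{subspace} $U_{l-n+1}$ itself does — the gap between all sections vanishing along the base locus and those actually lying in $U_{l-n+1}$, compounded by the integral-closure slack — and the crux is to show that this excess is of lower order in $l$ and hence washed out in the limit. This asymptotic, Fujita-approximation-flavoured step is where the real content of the statement lies.
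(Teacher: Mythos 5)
Your first two steps are sound and coincide with the paper's own argument: the second equality follows from $\mathfrak{b}_k\cdot\mathfrak{b}_{k'}\subseteq\mathfrak{b}_{k+k'}$, monotonicity of $\ep$ at very general points under adding effective divisors, and a superadditivity/Fekete argument (the paper phrases it as $M_k\geq lM_{k_0}+M_{N+i}$ for $k=k_0l+N+i$ rather than invoking Fekete; note only that superadditivity requires $W_k,W_{k'}\neq 0$, so one must restrict to $k\gg 0$, which birationality of $\w.$ permits), and the inequality $\ep(\w.;\mb)\leq\sup_k\ep(X_k,M_k;\mb)/k$ follows from $\mu_k^*|W_k|\subset|M_k|$ exactly as you say. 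The problem is the reverse inequality, which you correctly identify as the heart of the matter but do not prove. Your proposed route cannot be closed as stated: Brian\c{c}on--Skoda only gives $H^0(X_1,lM)\subseteq H^0(X,lB\otimes\mathfrak{b}(U_{l-n+1}))$, and this last space is the space of \emph{all} sections of $lB$ vanishing along the base ideal of $U_{l-n+1}$, which is in general strictly larger than the subspace $U_{l-n+1}$ (and than $U_l$); nothing in the jet-separation machinery bounds $j$ of the larger space by $j$ of the smaller one, and you explicitly concede that controlling this excess is an open step in your argument. So the proposal has a genuine gap.

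The paper closes this gap by never comparing complete linear systems on $X_k$ with the $W_{kl}$ at all; instead it changes the underlying variety. Since $\w.$ is birational, for $k\gg 0$ the morphism $\varphi_k:X_k\rightarrow\proj^{\dim|W_k|}$ defined by $\mu_k^*|W_k|$ is birational onto its image $Y_k$, and $M_k=\varphi_k^*\calo_{Y_k}(1)$. By Lemma \ref{usual def}, the Seshadri constant at very general points is the nef threshold on the blow-up, and this is invariant under the birational morphism $\varphi_k$; hence $\ep(X_k,M_k;\mb)=\ep(Y_k,\calo_{Y_k}(1);\mb)$. Now the decisive point: on $Y_k$ (unlike on $X_k$) the complete linear systems in high degree are known exactly, namely $H^0(Y_k,\calo_{Y_k}(l))=W_k^l$ for $l\gg 0$, where $W_k^l$ is the image of $W_k^{\otimes l}\rightarrow W_{kl}$ --- a genuine subspace of $W_{kl}$. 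Therefore $j(\calo_{Y_k}(l);\mb)=j(W_k^l;\mb)\leq j(W_{kl};\mb)$, and
$$\dfrac{\ep(X_k,M_k;\mb)}k=\dfrac1k\lim_l\dfrac{j(\calo_{Y_k}(l);\mb)}l\leq\lim_l\dfrac{j(W_{kl};\mb)}{kl}=\ep(\w.;\mb).$$
The excess sections you were worried about do exist (e.g. $H^0(X_k,lM_k)$ can exceed $W_k^l$ because $Y_k$ need not be normal, which is exactly the integral-closure slack), but they are invisible to the Seshadri constant: both $\ep(X_k,M_k;\mb)$ and $\ep(Y_k,\calo_{Y_k}(1);\mb)$ compute the same nef threshold, so no asymptotic estimate of that excess is ever needed. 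This substitution of the nef-threshold characterization for a direct linear-systems comparison is the idea missing from your proposal.
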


\begin{proof}
By definition,
$M_k$ is nef and $\ep(M_k,\mb) \geq 0$ for $k > 0$.
To prove the existence of $\lim_k \ep(X_k,M_k;\mb) / k$
and the second equality in the above statement,
it suffices to show the superadditivity of $\{ \ep(X_k,M_k;\mb) \}_k$,
i.e.,
\begin{align}
\tag{$\dagger$}
\ep(X_{k+l}, M_{k+l},\mb) \geq  \ep(X_k,M_k;\mb) + \ep(X_l,M_l;\mb)
\end{align}
for $k, l > 0$ such that $W_k,W_l \not = 0$.

To show ($\dagger$),
fix such $k , l > 0$.
We can take a common resolution of $\mathfrak{b}_k, \mathfrak{b}_l$, and $\mathfrak{b}_{k+l}$.
Since $M_{k+l} \geq M_k + M_l$, ($\dagger$) follows from Lemmas \ref{easy property of sc} (3) or \ref{usual def}.

\vspace{1.3mm}
Next we show the first equality.
Since $\mu_k^{*}|W_k| \subset |M_k|$,
it holds that $s(W_k;\mb) \leq  s(M_k;\mb) \leq \ep(M_k;\mb)$ for any $k$.
Thus we have
\[
\ep (X,\w.;\mb) =\lim  \dfrac{s(W_k;\mb)}k \leq \lim \dfrac{\ep(M_k;\mb)}k .
\]

To show the opposite inequality,
we use Lemma \ref{usual def}.
Since $\w.$ is birational,
the morphism $\varphi_k : X_k \rightarrow \P^{\dim |W_k|}$ defined by $\mu_k^{*}|W_k|$
is birational onto its image for $k \gg 0$.
Denote the image of $\varphi_k$ by $Y_k$.
By Lemma \ref{usual def},
$\ep(X_k,M_k;\mb)=\ep(Y_k,\calo_{Y_k}(1);\mb)$ holds
because $\varphi_k$ is birational.
Furthermore $W_k^l=H^0(Y_k,\calo_{Y_k}(l))$ for $l \gg 0$,
where $W_k^l$ is the image of $W_k^{\otimes l} \rightarrow W_{kl}$.
This implies $s(\calo_{Y_k}(l);\mb) = s(W_k ^l;\mb) \leq s(W_{kl};\mb)$ for $l \gg 0$.
Thus we have
\begin{align*}
\dfrac{\ep(X_k,M_k;\mb)}k
&= \dfrac{\ep(Y_k,\calo_{Y_k}(1);\mb)}k \\
&= \frac1k \lim_l \dfrac{s(\calo_{Y_k}(l);\mb)}l \\
&\leq  \lim \dfrac{s(W_{kl};\mb)}{kl}= \ep(\w.;\mb). \qedhere
\end{align*}
\end{proof}


\section{Monomial graded linear series on $(\*c)^n$}

\begin{defn}\label{def of monomial g.l.s.}
Let $n$ be a positive integer.
For a subset $S \subset \R^n$,
we set
\[
V_S:=\bigoplus_{u \in S \cap \Z^n}\C x^u,
\]
which is a subspace of $\bigoplus_{u \in \Z^n}\C x^u=H^0((\C^{\times })^n,\calo_{(\C^{\times })^n}).$

For a convex set $\Delta$ in $\R^n$,
we define the \textit{monomial graded linear series} $W_{\Delta,\bullet}$ associated to $\calo_{(\*c)^n}$ by
\[
W_{\Delta,k}:=V_{k\Delta} \subset H^0((\*c)^n,\calo_{(\*c)^n}).
\]
It is easy to see that $\wdel$ is birational
if and only if $\dim \Delta=n$.
\end{defn}

\begin{defn}\label{def_of_ep_delta}
For an $n$-dimensional convex set  $\Delta \subset \R^n$ and $\mb \in \rp^r$,
we define $\ep(\Delta;\mb):=\ep(W_{\Delta,\bullet},\mb) \in \rp$.
\end{defn}

\begin{rem}
For an integral polytope $\Delta \subset \R^n$ of dimension $n$,
clearly $\ep(\Delta;\mb) = \ep(X_{\Delta},L_{\Delta};\mb)$ holds,
where $(X_{\Delta},L_{\Delta})=(\Proj \bigoplus_k V_{k \Delta}, \calo(1))$
is the polarized toric variety corresponding to $\Delta$.
\end{rem}

We show some basic properties of $\ep(\Delta;\mb)$ in this section.

\begin{lem}\label{property1 of s}
The following hold for subsets $S_1,S_2$ and $n$-dimensional convex sets $\Delta_1, \Delta_2 $ in $\R^n$
such that $S_1 \subset S_2, \, \Delta_1 \subset \Delta_2$.
\begin{itemize}
\item[(1)] $ s(V_{S_1};\mb) \leq s(V_{S_2};\mb), \,
\ep(\Delta_1;\mb) \leq \ep(\Delta_2;\mb)$.
\item[(2)]$ s(V_{S_1+u};\mb)= s(V_{S_1};\mb)$ for $u \in \Z^n$.
\item[(3)] $\ep(t\Delta_1;\mb)=t \cdot \ep(\Delta_1;\mb), \,
\ep(\Delta_1 +u;\mb)=\ep(\Delta_1;\mb)$
for $t \in \rp \cap \Q$ and $u \in \Q^n$.
\end{itemize}

\end{lem}

\begin{proof}
(1) is clear.
(2) immediately follows from the diagram
\[\xymatrix{
V_{S_1} \ar[d]_{\wr} \ar@{^{(}->}[r] \ar@{}[dr]|\circlearrowleft & \C[x_1^{\pm 1},\ldots,x_n^{\pm 1}] \ar[d]^{\times x^u}_{\wr } \\
V_{S_1+u} \ar@{^{(}->}[r] & \C[x_1^{\pm 1},\ldots,x_n^{\pm 1}] . \\
}\]
\\ 
To show (3),
choose sufficiently divisible $l \in \zp$ such that $lt \in \Z$ and $lu \in \Z^n$.
Then we have
\begin{align*}
\ep(t\Delta_1;\mb)&= \lim_k \dfrac{s(V_{kt\Delta_1};\mb)}k\\
&= \lim_k \dfrac{s(V_{klt\Delta_1};\mb)}{kl}\\
&= t \lim_k \dfrac{s(V_{klt\Delta_1};\mb)}{klt}=t \cdot \ep(\Delta_1;\mb)
\end{align*}
and
\begin{align*}
\ep(\Delta_1 +u;\mb)&= \lim_k \dfrac{ s(V_{k(\Delta_1+u)};\mb)}k\\
&= \lim_k \dfrac{ s(V_{kl(\Delta_1+u)};\mb)}{kl}\\
&= \lim_k \dfrac{ s(V_{kl\Delta_1+klu};\mb)}{kl}\\
&= \lim_k \dfrac{ s(V_{kl\Delta_1};\mb)}{kl}= \ep(\Delta_1;\mb).
\end{align*}
The last but one equality follows from (2) since $klu \in \Z^n$.
\end{proof}

\begin{lem}\label{interior}
For an $n$-dimensional convex set $\Delta \subset \R^n$,
$\ep(\Delta;\mb)=\ep(\Delta^\circ;\mb)$ holds.
\end{lem}

\begin{proof}
It is enough to show $\ep(\Delta;\mb) \leq \ep(\Delta^\circ;\mb) $.
Fix $u \in \Delta^\circ \cap \Q^n$.
By the convexity of $\Delta$,
we have $\Delta - u \subset t  (\Delta^\circ - u)$ for $t >1$.
Thus it holds that
\[
\ep(\Delta;\mb)=\ep(\Delta-u;\mb)
\leq \ep(t(\Delta^\circ -u);\mb) = t \cdot \ep(\Delta^\circ;\mb)
\]
for $t >1$ in $\Q$ by Lemma \ref{property1 of s}. 
By $t \rightarrow 1$,
the lemma is proved.
\end{proof}

The property (3) in Lemma \ref{property1 of s} holds
for any $t \in \rp$ and $u \in \R^n$.

\begin{lem}\label{for R}
For an $n$-dimensional convex set $\Delta \subset \R^n$,
$u \in \R^n$, and $t \in \rp$,
it holds that
\[
\ep(\Delta +u;\mb)=\ep(\Delta;\mb),
\quad \ep(t\Delta;\mb)=t \cdot \ep(\Delta;\mb).
\]
\end{lem}

\begin{proof}
Fix $u' \in \Delta^\circ \cap \Q^n$.
As in the proof of Lemma \ref{interior},
$\Delta - u' \subset (1+t') (\Delta^\circ - u') $
holds for $t' > 0$.
Translating the convex sets by $u+u'$,
we have $\Delta + u \subset (1+t') (\Delta^\circ - u') + u+u' $.
Choose $u'' \in \Q^n$ such that $ (u+u') - u'' \in t' (\Delta^\circ - u')$.
Then we have $\Delta + u \subset (1+2t') (\Delta^\circ - u') + u'' $.
Hence for $t' \in \rp \cap \Q$,
it holds that
\[
\ep(\Delta+u;\mb) \leq \ep((1+2t') (\Delta^\circ - u') + u'';\mb) = (1+2t') \ep(\Delta;\mb)
\]
by Lemmas \ref{property1 of s} (3) and \ref{interior}.
Thus we obtain $\ep(\Delta +u;\mb) \leq \ep(\Delta;\mb)$ by $t' \rightarrow 0$.
Since the opposite inequality $\ep(\Delta +u;\mb) \geq \ep(\Delta;\mb)$ also holds similarly,
$\ep(\Delta +u;\mb) = \ep(\Delta;\mb)$ follows.
\smallskip

For $t_1,t_2 \in \Q$ such that $0 < t_1 \leq t \leq t_2 $,
we have inclusions $t_1(\Delta-u') \subset t(\Delta-u') \subset t_2(\Delta-u')$.
Thus it holds that
\[
\ep(t_1(\Delta-u');\mb) \leq \ep(t(\Delta-u');\mb) \leq \ep(t_2(\Delta-u');\mb).
\]
By Lemma \ref{property1 of s} (3) and the first equality of this lemma, which we have already proved,
$\ep(t(\Delta-u');\mb)=\ep(t\Delta;\mb)$
and
$\ep(t_i(\Delta-u');\mb) = t_i \cdot \ep(\Delta;\mb)$
for $i=1,2$.
Combining these inequalities,
we have
\[
t_1 \cdot \ep(\Delta;\mb) \leq \ep(t\Delta;\mb) \leq t_2 \cdot \ep(\Delta;\mb).
\]
By $t_1,t_2 \rightarrow t$,
$\ep(t\Delta;\mb)=t \cdot \ep(\Delta;\mb)$ follows.
\end{proof}

\begin{lem}\label{s=lim s_i}
Let $\Delta_1 \subset \Delta_2 \subset \cdots \subset \Delta_i \subset \cdots$
be an increasing sequence of $n$-dimensional convex sets in $\R^n$,
and set $\Delta=\bigcup_{i = 1}^{\infty} \Delta_i$.
Then it holds that $\ep(\Delta;\mb)=\sup_i \ep(\Delta_i;\mb)=\lim_i \ep(\Delta_i;\mb)$
for any $\mb \in \rp^r$. 
\end{lem}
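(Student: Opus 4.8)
The plan is to reduce to Lemma \ref{increasing seq of g.l.s.} applied to the monomial graded linear series $\wdel$. First I would record the easy half: the union $\Delta = \bigcup_i \Delta_i$ is convex (for $x \in \Delta_i$, $y \in \Delta_j$ both points lie in the convex set $\Delta_{\max(i,j)}$), so $s(\Delta; \mb)$ is defined; by the monotonicity in Remark \ref{property1 of s} (1) the sequence $s(\Delta_i; \mb)$ is nondecreasing, hence $\lim_i s(\Delta_i; \mb) = \sup_i s(\Delta_i; \mb)$ exists in $\r+ \cup \{+\infty\}$, and each inclusion $\Delta_i \subset \Delta$ gives $s(\Delta_i; \mb) \leq s(\Delta; \mb)$, so $\sup_i s(\Delta_i; \mb) \leq s(\Delta; \mb)$. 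The remaining task is the reverse inequality $s(\Delta; \mb) \leq \sup_i s(\Delta_i; \mb)$.

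I would first dispose of the degenerate case: if $\dim \Delta < n$ then every $\Delta_i \subset \Delta$ has $\dim \Delta_i < n$, so both sides vanish by definition. So assume $\dim \Delta = n$. Picking $n+1$ affinely independent points of $\Delta$ and using that the $\Delta_i$ increase, I can place all of them in a single $\Delta_{i_0}$, so that $\dim \Delta_i = n$ for every $i \geq i_0$. Hence $\wdel$ and all $W_{\Delta_i, \bullet}$ with $i \geq i_0$ are birational, and $s(\Delta; \mb) = \ep(\wdel; \mb)$, $s(\Delta_i; \mb) = \ep(W_{\Delta_i, \bullet}; \mb)$ for such $i$.

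The heart of the argument is to identify $\wdel$ as the increasing union of the $W_{\Delta_i, \bullet}$. Since $k\Delta = \bigcup_i k\Delta_i$ one gets $k\Delta \cap \Z^n = \bigcup_i (k\Delta_i \cap \Z^n)$; as any element of $V_{k\Delta}$ is a finite combination of monomials $x^u$ with $u \in k\Delta \cap \Z^n$, each such $u$ lies in some $k\Delta_i$, and taking the largest index shows the whole element lies in some $V_{k\Delta_i}$. Thus $W_{\Delta, k} = \bigcup_{i \geq i_0} W_{\Delta_i, k}$ for every $k$, and Lemma \ref{increasing seq of g.l.s.} applied to $W_{\Delta_{i_0}, \bullet} \subset W_{\Delta_{i_0+1}, \bullet} \subset \cdots \subset \wdel$ gives $\ep(\wdel; \mb) = \sup_{i \geq i_0} \ep(W_{\Delta_i, \bullet}; \mb) = \lim_i \ep(W_{\Delta_i, \bullet}; \mb)$, which is the desired conclusion once one notes that the tail supremum and limit coincide with the full ones by monotonicity. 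The step I expect to need the most care is precisely this degeneracy issue: Lemma \ref{increasing seq of g.l.s.} requires birational members, so the chain must be started at the index $i_0$ where full dimension is first attained, which is exactly why the affine-independence argument is needed rather than simply invoking the lemma on the whole sequence.
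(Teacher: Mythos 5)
Your proof is correct and follows exactly the paper's route: the paper's own proof of Lemma \ref{s=lim s_i} is the one-line statement that it ``follows from Lemma \ref{increasing seq of g.l.s.} immediately,'' which is precisely your reduction via $W_{\Delta,k}=\bigcup_i W_{\Delta_i,k}$. Your additional care with the degenerate case $\dim\Delta<n$ and with starting the chain at the index $i_0$ where the $\Delta_i$ attain full dimension (so that birationality, required by Lemma \ref{increasing seq of g.l.s.}, holds) just makes explicit the details the paper leaves to the reader.
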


\begin{proof}
This lemma follows from Lemma \ref{increasing seq of g.l.s.} immediately.
\end{proof}



\section{Okounkov bodies and Seshadri constants}\label{Okounkov bodies and Seshadri constants}

In this section,
we prove Theorem \ref{main}.

\subsection{Preliminary}

For a subsemigroup $\Gamma$ in $\N \times \N^n$ and $k \in \N$,
set
\begin{align*}
\Delta(\Gamma) &= \Sigma(\Gamma) \cap (\{1\} \times \R^n),\\
\Gamma_k &= \Gamma \cap (\{k\} \times \N^n).
\end{align*}
Recall that $\Sigma(\Gamma)$ is the closed convex cone in $\R \times \R^n$ spanned by $\Gamma$.
We regard $\Delta(\Gamma)$ and $\Gamma_k$ as subsets in $\R^n$ and $\N^n$ respectively in a natural way.

\begin{defn}
For a semigroup $\Gamma$ in $\N \times \N^n$,
we define a graded linear series $W(\Gamma)_{\bullet}$ on $(\*c)^n$ associated to $\calo_{(\*c)^n}$ by
$W(\Gamma)_{k} := V_{\Gamma_k}.$
\end{defn}

The birationality of $W(\Gamma)_{\bullet}$ is interpreted as the following conditions of $\Gamma$.

\begin{defn}\label{def of bir for semigroup}
A semigroup $\Gamma$ in $\N \times \N^n$ is \textit{birational} if
\begin{itemize}
\item[i)] $\Gamma_0=\{0\} \in \N^n $,
\item[ii)] $\Gamma$ generates $\Z \times \Z^n$ as a group.
\end{itemize}
These conditions are (2.3) and (2,5) in \cite{LM} respectively.
It is easy to check that $\Gamma$ is birational
if and only if so is
the graded linear series $W(\Gamma)_{\bullet}$.
\end{defn}

Let $>$ be a \textit{monomial order} on $\N^n$,
i.e.,
$>$ is a total order on $\N^n$ such that
(i) for every $u \in \N^n \setminus 0$, $u > 0$ holds,
and (ii) if $ v > u$ and $w \in \N^n$, then $w+v > w+u$.
In this paper,
$v > u$ does not contain the case $v=u$.

Let $X$ be a variety of dimension $n$
and  $z=(z_1,\ldots,z_n)$ a local coordinate system at a smooth point $p \in X$.
For a birational graded linear series $\w.$ associated to a line bundle $L$ on $X$,
we can define a semigroup $\Gamma_{\w.}=\Gamma_{\w.,z,>} \subset \N \times \N^n$
by using $\nu = \nu_{z,>} : \calo_{X,p} \setminus 0 \rarw \N^n$ as in Introduction.

In \cite[Lemma 2.12]{LM},
they assume that $\w.$ satisfies  ``Condition (C)'',
which seems to be a slightly stronger condition than being birational(= Condition (B) in \cite{LM}),
to show that $\Gamma_{\w.,z,>}$ generates $\Z \times \Z^n$ as a group for {\it any} $z$.
But we can show that it is enough to assume $\w.$ is birational.

\begin{lem}\label{cond(B) is enough}
Let $\w.$ be a birational graded linear series associated to a line bundle $L$ on a variety $X$.
Then $\Gamma_{\w.,z,>}$ is birational for any local coordinate system $z$ at any smooth point $p \in X$
and any monomial order $>$ on $\N^n$.
\end{lem}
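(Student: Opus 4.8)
The plan is to verify directly the two defining conditions of Definition~\ref{def of bir for semigroup} for $\Gamma_{\w.}=\Gamma_{\w.,z,>}$. Condition i) is immediate: since $W_0=\C$ consists of constants, every nonzero element of $W_0$ maps to a unit of $\calo_{X,p}$ under the chosen isomorphism $L^{\otimes 0}_p\cong\calo_{X,p}$, so $\nu$ sends it to $0\in\N^n$; hence $\Gamma_0=\{0\}$. All the work is in condition ii), that $\Gamma_{\w.}$ generates $\Z\times\Z^n$ as a group.

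First I would reduce ii) to a statement about a single level. Write $G=\langle\Gamma_{\w.}\rangle$ for the subgroup generated. For any fixed $k$ with $W_k\neq 0$, choosing some $(k,u)\in\Gamma_{\w.}$ shows that once $\{0\}\times\Z^n\subseteq G$ we get $(k,0)\in G$; applying this to $k$ and $k+1$ (both nonzero for $k\gg 0$ by birationality) yields $(1,0)=(k+1,0)-(k,0)\in G$, and then $G=\Z\times\Z^n$. So it suffices to prove $\{0\}\times\Z^n\subseteq G$, i.e. that the differences $\nu(s)-\nu(s')$ with $s,s'\in W_k$ generate $\Z^n$ for some (equivalently, all) $k\gg 0$.

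The key input is that $\nu$ is multiplicative on leading terms with respect to the monomial order, i.e. $\nu(fg)=\nu(f)+\nu(g)$, so it extends to a surjective group homomorphism $\nu\colon K(X)^{\times}\to\Z^n$ with $\nu(\C^{\times})=0$ and residue field $\C$; surjectivity is clear from $\nu(z_i)=e_i$. Fix $k\gg 0$ and pick $s_0\in W_k$ with $\nu(s_0)$ minimal, so that $V:=\{s/s_0\mid s\in W_k\}$ is a finite-dimensional $\C$-linear subspace of $\calo_{X,p}$ containing $1$, with $\nu(V\setminus\{0\})=\nu(W_k)-\nu(s_0)$ and $\langle\nu(V\setminus\{0\})\rangle=\langle\nu(s)-\nu(s')\mid s,s'\in W_k\rangle$. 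Birationality of $\w.$ (Definition~\ref{def of birational}) says exactly that $\C(V)=K(X)$. Thus the whole lemma comes down to the following assertion: if a finite-dimensional linear subspace $V\subseteq\calo_{X,p}$ with $1\in V$ generates $K(X)$ as a field over $\C$, then the subgroup of $\Z^n$ generated by $\nu(V\setminus\{0\})$ equals $\Z^n$.

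I expect this last assertion to be the main obstacle, and it is precisely the point where being birational (Condition (B)) is used in place of the stronger Condition (C) of \cite{LM}. The naive attempt---writing each $z_i$ as a rational expression in elements of $V$ and reading off valuations---fails because cancellation of leading terms in a sum can produce a valuation lying \emph{outside} the group generated by the valuations of the summands. The remedy is the linear-space structure of $V$: since $V$ is closed under $\C$-linear combinations, any cancellation among elements of a common valuation already occurs inside $V$, so $\nu(V\setminus\{0\})$ is a complete list (of cardinality $\dim V$) of the valuations visible to $V$. I would formalize this by passing to the associated graded ring of the monomial valuation, which is the Laurent algebra $\C[\Z^n]$ with the $i$-th coordinate in degree $e_i$: the initial forms of $V$ are monomials whose exponents are exactly $\nu(V\setminus\{0\})$, and one shows that $\C(V)=K(X)$ forces these monomials, together with their inverses, to generate $\C[\Z^n]$ as a graded field---which is precisely the statement $\langle\nu(V\setminus\{0\})\rangle=\Z^n$. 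Granting this, condition ii) and hence the lemma follow.
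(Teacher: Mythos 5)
Your verification of condition i) and your first reduction (once $\{0\}\times\Z^n\subseteq G$, consecutive levels give $(1,0)\in G$ and hence $G=\Z\times\Z^n$) are fine, but the next step contains a genuine gap: you silently replace the condition $\{0\}\times\Z^n\subseteq G$ by the strictly stronger claim that the differences $\nu(s)-\nu(s')$ with $s,s'\in W_k$ already generate $\Z^n$ at a \emph{single} level $k\gg0$, and you then reduce this to the assertion that for any linear subspace $V\subseteq\calo_{X,p}$ with $1\in V$ and $\C(V)=K(X)$ one has $\langle\nu(V\setminus\{0\})\rangle=\Z^n$. That assertion is false. Take $n=1$, so $\nu$ is the vanishing order at the point, and let $V=\C\cdot 1\oplus\C\,z^2\oplus\C\,(z^4+z^5)$. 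Then $\C(V)\ni\bigl((z^4+z^5)-(z^2)^2\bigr)/(z^2)^2=z$, so $\C(V)=\C(z)=K(X)$, yet $\nu(V\setminus\{0\})=\{0,2,4\}$ generates only $2\Z$. This also pinpoints why your proposed ``remedy'' fails: the cancellation producing the odd valuation $\nu(z^5)=5$ occurs between $z^4+z^5$ and the \emph{product} $(z^2)^2$, which does not lie in $V$; linear closure of $V$ controls cancellation only among elements of $V$ itself, whereas field-theoretic generation uses polynomial expressions of arbitrary degree. Correspondingly the associated-graded heuristic collapses: in the example the initial forms of $V$ generate only $\C[z^{\pm2}]$, even though $\C(V)=\C(z)$, because passing to initial forms is not compatible with sums in which leading terms cancel.

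What rescues the lemma---and what the paper's proof actually does---is precisely the multiplicative, graded structure that your reduction discards. Given $F\in K(X)\setminus\{0\}$, birationality lets one write $F$ as a polynomial expression in ratios $f/g$ with $f,g\in W_k$; clearing denominators gives $F=G'/H'$ with $G',H'\in W_{kl}$ for some $l$ \emph{depending on} $F$, whence $\nu(F)=\nu(G')-\nu(H')\in\nu(W_{kl})-\nu(W_{kl})\subseteq G$. Surjectivity of $\nu\colon K(X)\setminus\{0\}\to\Z^n$ (via $\nu(z_i)=e_i$, as you note) then yields $\{0\}\times\Z^n\subseteq G$ from the union of these differences over \emph{all} levels $kl$, with no claim needed at any single level; in the example above, the missing valuation $5$ does appear, but only in $W_1\cdot W_1\subseteq W_2$, not in $W_1$. (One can in fact recover a single-level statement by pushing finitely many such identities into a common level $W_{kl}$, multiplying numerators and denominators by powers of a fixed section, but some such use of the grading is unavoidable; your key assertion about a bare subspace $V$ is simply not true.) A minor further point: $W_k$ need not be finite dimensional, since the paper does not assume finite dimensional type---your choice of $s_0$ with $\nu(s_0)$ minimal is still legitimate because monomial orders are well-orders, but the finiteness claim should be dropped.
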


\begin{proof}
The condition i) in Definition \ref{def of bir for semigroup} is clearly satisfied.
Thus it is enough to show that $\Gamma_{\w.}$ generates $\Z \times \Z^n$ as a group.
Fix $k \gg 0$.
Then the function field $K(X)$ is generated by $\{ \, f / g \in K(X) \, | \, f,g \in W_k, g \not = 0  \}$ over $\C$
because $\w.$ is birational.
Hence for any $F \in K(X) \setminus \{0\}$,
we can write $F= G/H$,
where $G,H$ are written as some polynomials over $\C$
of some elements in $\{ \, f / g \in K(X) \, | \, f,g \in W_k, g \not = 0  \}$.
Therefore we can write $F=G'/H'$ for some $G',H' \in W_{kl}$ for some $l \in \zp$.
Thus $\nu(F)= \nu(G')-\nu(H') \in \nu(W_{kl})-\nu(W_{kl}) \subset \Z^n. $
Since the valuation $\nu : K(X) \setminus \{0\} \rarw \Z^n$ is surjective (note that $\nu$ is naturally extended to $K(X) \setminus \{0\}$),
the group $\Z^n$ is generated by  $\{ \nu(W_{kl})-\nu(W_{kl}) \}_{l \in \N}$.
Thus the subgroup $\{0\} \times \Z^n$ in $\Z \times \Z^n$ is generated by
$\{0\} \times \{ \nu(W_{kl})-\nu(W_{kl}) \}_l \subset \Gamma_{\w.} - \Gamma_{\w.}$.

On the other hand,
$s_k \in W_k \setminus \{0\}$ and $ s_{k+1} \in W_{k+1} \setminus \{0\}$ induce the element
$(1,\nu(s_{k+1})-\nu(s_k)) \in \Gamma_{\w.} - \Gamma_{\w.} \subset \Z \times \Z^n$.
Since the group $\Z \times \Z^n$ is generated by $\{0\} \times \Z^n$ and $(1,\nu(s_{k+1})-\nu(s_k))$,
the semigroup $\Gamma_{\w.}$ generates $\Z \times \Z^n$ as a group.
\end{proof}

Let $\w.$ be a birational graded linear series associated to a line bundle $L$ on $X$.
We define the Okounkov body $\Delta(\w.)=\Delta_{z,>}(\w.)$ of $\w.$ with respect to $z$ and $>$ as in Introduction.
That is, $\Delta_{z,>}(\w.) = \Delta (\Gamma_{\w.,z,>}  )$.
Thus $\Delta(\w.)$ is an $n$-dimensional closed convex set in $\R^n$
because $\Gamma_{\w.} $ is birational by Lemma \ref{cond(B) is enough}.

\vspace{2mm}

The following lemma is similar to \cite[Lemma 5.2]{An}.

\begin{lem}\label{existence of alpha}
For a monomial order $>$ on $\N^n$
and a finite set $S$ in $\N^n$,
there exists $\alpha \in \zp^n$ satisfying the following:
For $u \in S$ and $v \in \N^n$ such that $v > u$,
it holds that $\alpha \cdot v > \alpha \cdot u$,
where $\alpha \cdot u, \, \alpha \cdot v$ are the usual inner products.
\end{lem}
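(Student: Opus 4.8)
The plan is to reduce the infinite family of required inequalities (one for each pair $u \in S$, $v \in \N^n$ with $v > u$) to a \emph{finite} system of strict inequalities, and then to produce $\alpha$ by a linear-programming duality argument in which the strict positivity of the entries is built directly into that finite system.

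First I would carry out the reduction to finitely many comparisons. Fix $u \in S$ and consider $U_u = \{\, v \in \N^n \mid v > u \,\}$. This set is an up-set for the coordinatewise partial order on $\N^n$: if $v > u$ and $w \geq v$ coordinatewise, then $w - v \in \N^n$, so $w \geq v > u$ by the first axiom of a monomial order (elements of $\N^n \setminus 0$ are positive) together with compatibility with addition, whence $w > u$. By Dickson's lemma, $U_u$ has only finitely many minimal elements; call this finite set $V_u$, so that every $v \in U_u$ dominates some $v' \in V_u$ coordinatewise. Since I am seeking $\alpha \in \zp^n$, the inequality $\alpha \cdot v \geq \alpha \cdot v'$ holds automatically whenever $v \geq v'$ coordinatewise; therefore it suffices to guarantee $\alpha \cdot v' > \alpha \cdot u$ for the finitely many pairs in $F := \{\, (v',u) \mid u \in S,\ v' \in V_u \,\}$.

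Next I would solve this finite system, folding positivity into it. Put $D = \{\, v' - u \mid (v',u) \in F \,\} \cup \{ e_1, \ldots, e_n \} \subset \Z^n$, where $e_i$ are the standard basis vectors; every $d \in D$ satisfies $d \succ 0$ in the total group order on $\Z^n$ extending $>$ (for the pairs because $v' > u$, for the $e_i$ because $e_i > 0$). It is enough to find $\alpha \in \Z^n$ with $\alpha \cdot d > 0$ for all $d \in D$, since the constraints $\alpha \cdot e_i > 0$ then force $\alpha \in \zp^n$ while the remaining constraints yield the desired conclusion through the reduction above. By Gordan's theorem, such a real $\alpha$ fails to exist only if there are rationals $\lambda_d \geq 0$, not all zero, with $\sum_d \lambda_d d = 0$; clearing denominators I may take $\lambda_d \in \N$, and then $\sum_d \lambda_d d \succ 0$ (a sum of elements $\succ 0$ with nonnegative integer coefficients, not all zero, is again $\succ 0$), contradicting $\sum_d \lambda_d d = 0$. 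Hence a real, then (the system being rational and open) a rational, and after scaling an integral $\alpha$ exists, lying in $\zp^n$ by the $e_i$-constraints.

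The routine part is the duality step; the two points that genuinely require care are passing from the infinite family of inequalities to a finite one — which is exactly what Dickson's lemma supplies, using that $\{v : v > u\}$ is an up-set — and arranging the entries of $\alpha$ to be strictly positive, which I handle by incorporating the comparisons $e_i > 0$ into the finite system so that Gordan's theorem returns a vector in $\zp^n$ automatically. (One can avoid the explicit group order on $\Z^n$ and argue the contradiction directly in $\N^n$ via $\sum_d \lambda_d v'_d > \sum_d \lambda_d u_d$, but the extension to $\Z^n$ keeps the bookkeeping cleaner.)
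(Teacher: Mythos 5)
Your proof is correct, and it takes a genuinely different route from the paper's in the decisive step. The finiteness reduction is essentially the same in both arguments: you invoke Dickson's lemma on the up-set $\{v \in \N^n \mid v > u\}$, while the paper packages the identical fact as Hilbert's basis theorem applied to the monomial ideal generated by $\{x^v \mid v > u\}$; either way one obtains finitely many pairs $(v',u)$ such that every comparison follows from these by adding elements of $\N^n$. The divergence is in how the weight vector is then produced. The paper appeals to Robbiano's structure theorem for monomial orders (representation of $>$ by real vectors $u_1,\ldots,u_s$ composed with the lexicographic order on $\R^s$) and takes a combination $\beta_1 u_1 + \cdots + \beta_s u_s$ with $\beta_1 \gg \cdots \gg \beta_s$, afterwards perturbing to a rational and then integral $\alpha$. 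You instead fold the positivity constraints $\alpha \cdot e_i > 0$ into the finite system and apply Gordan's theorem: a nonzero nonnegative integral relation $\sum_d \lambda_d d = 0$ among the difference vectors would contradict the order axioms, since a sum of elements positive for the (group-order extension of the) monomial order is again positive. Your route is more self-contained --- it needs only finite-dimensional LP duality and the order axioms, not the classification of monomial orders --- whereas the paper's is shorter once Robbiano's theorem is taken as given. One small point: you use without proof that $>$ extends to a total group order on $\Z^n$ (which is standard: declare $p - q \succ 0$ iff $p > q$, well-defined by cancellation), but your parenthetical remark correctly notes the argument can be run entirely inside $\N^n$, so this is not a gap.
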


\begin{proof}
For each $u \in S$,
set $S_u = \{ v \in \N^n \, | \, v > u\}$.
Let $I_u$ be the ideal in the polynomial ring $\C[\N^n]=\C[x_1,\ldots,x_n]$
generated by $\{ \, x^v \, | \, v \in S_u\}$.
By Hilbert's basis theorem,
$I_u$ is generated by $x^{v_{u1}},\ldots,x^{v_{uk_u}}$ for some $k_u \in \N$
and $v_{u1},\ldots,v_{uk_u} \in S_u $.
Therefore any $v \in S_u$ is contained in $v_{uj} + \N^n$ for some $j$.

We use the following result by Robbiano.

\begin{thm}[{\cite[Theorem 2.5]{Ro}}]\label{monomial order and weight}
For a monomial order $>$ on $\N^n$,
there exist an integer $s \in \{1, \ldots, n \}$
and $u_1 , . . ., u_s \in \R^n$
which satisfy the following:
For $u,v \in \N^n$,
$v > u $ if and only if $ \pi(v) >_{\lex}  \pi(u)$,
where
\[
\pi :\N^n \rarw \R^s ; \ u \mapsto (u_1 \cdot u,\ldots, u_s \cdot u) 
\]
and $>_{\lex}$ is the lexicographic order on $\R^s$.
\end{thm}

Let $e_1,\ldots,e_n$ be the standard basis of $\Z^n$
and consider the above $u_1,\ldots,u_s$ and $\pi$.
For any $\gamma >_{\lex} \delta$ in $\R^s$,
the following holds from the definition of the lexicographic order:
For $ \beta=(\beta_1,\ldots,\beta_s) \in \rp^s$ such that $\beta_1\gg \cdots \gg\beta_s$,
we have $\beta \cdot \gamma > \beta \cdot \delta$.

Hence we can take $\beta \in \rp^s$ such that
\begin{itemize}
\item $\beta \cdot \pi(e_i) > 0 $ for $1 \leq i \leq n$,
\item $\beta \cdot \pi(v_{uj}) > \beta \cdot \pi(u) $ for $u \in S$ and $ 1 \leq j \leq k_u$,
\end{itemize}
since $\pi(e_i) >_{\lex} \pi(0)=0 $ and $\pi(v_{uj}) >_{\lex} \pi(u) $
(we can take common $\beta$ because $S$ is a finite set).
Since $\beta \cdot \pi(u') = (\beta_1 u_1 + \cdots + \beta_s u_s) \cdot u'$ for $u' \in \N^n$,
we have
\begin{align}
\tag{$*$}
\alpha' \cdot e_i > 0, \quad \alpha' \cdot v_{uj} > \alpha' \cdot u
\end{align}
for $1 \leq i \leq n,u \in S$ and $ 1 \leq j \leq k_u$
if we take $\alpha' \in \Q^n$ sufficiently close to $ \beta_1 u_1 + \cdots + \beta_s u_s $.
Set $\alpha:=N \alpha' \in \Z^n$ for a sufficiently divisible positive integer $N$.
By $(*)$, it follows that $\alpha \in \zp^n$ and $\alpha \cdot v_{uj} > \alpha \cdot u$ for $u \in S$ and $j$.

We show this $\alpha$ satisfies the condition in the statement of this lemma.
Fix $u \in S$ and $v \in \N^n$ such that $v > u$,
i.e.,
$v \in S_u$.
Then $v \in v_{uj} + \N^n$ for some $1 \leq j \leq k_u$.
Thus we have $\alpha \cdot v = \alpha \cdot v_{uj} + \alpha \cdot (v-v_{uj}) \geq  \alpha \cdot v_{uj} > \alpha \cdot u $.
\end{proof}

\subsection{Proof of Theorem \ref{main}}

Since Seshadri constants are lower semicontinuous (cf.\  \cite[Example 5.1.11]{La}),
degenerations are useful to bound Seshadri constants from below.
For example,
Biran \cite{Bi} degenerates varieties to reducible schemes,
and gives lower bounds of (multi-point) Seshadri constants on $\P^2$.
In \cite{It},
the author uses toric degenerations to obtain lower bounds of Seshadri constants on some non-toric varieties.

We also use degenerations to prove Theorem \ref{main}.
Although we would like to degenerate $\bigoplus_k W_k$ to $\C[\Gamma_{\w.}]=\bigoplus_k V_{\nu(W_k)}$,
the semigroup $\Gamma_{\w.}$ is not finitely generated in general,
even if $\bigoplus_k W_k $ is finitely generated.
Instead,
we degenerate $W_k$ to $V_{\nu(W_k) }$ for each $k$ separately.
Then we can use the lower semicontinuity of jet separation, as in \cite{CM}.
This is the reason why we define Seshadri constants in terms of jet separation.

\vspace{2mm}
First,
we show that the Seshadri constants of $W(\Gamma)_{\bullet} $ and $W_{\Delta(\Gamma),\bullet}$ coincide
for a birational semigroup $\Gamma$.

\begin{lem}\label{s for semigroup}
For a birational semigroup $\Gamma \subset \N \times \N^n$ and $\mb \in \rp^r$,
we have $\ep(\Delta(\Gamma);\mb)=\ep(W(\Gamma)_{\bullet};\mb)$.
\end{lem}

\begin{proof}
By definition,
it holds that
\[
\ep(\Delta(\Gamma);\mb)
=\lim \dfrac{ s(V_{k \Delta(\Gamma)};\mb)}k, \quad
\ep(W(\Gamma)_{\bullet};\mb)  
= \lim \dfrac{ s(V_{\Gamma_k};\mb)}k.
\]
Since $\Gamma_k$ is contained in $k \Delta(\Gamma)$ for any $k$,
$\ep(\Delta(\Gamma);\mb) \geq \ep(W(\Gamma)_{\bullet};\mb)$ is clear.
Hence it is enough to show the opposite inequality.

When $\Gamma$ is finitely generated,
there exists $\tilde{u} =(l,u) \in \Gamma \subset \N \times \N^n$ such that
$(\Sigma(\Gamma) +\tilde{u} ) \cap (\N \times \N^n) \subset \Gamma$
by \cite[\S 3, Proposition 3]{Kh} (see also \cite[Subsection 2.1]{LM}).
This induces $(k  \Delta(\Gamma) \cap \N^n) +u \subset \Gamma_{k+l}$,
from which we have $ s(V_{k  \Delta(\Gamma)};\mb)= s( V_{(k  \Delta(\Gamma) \cap \N^n) +u};\mb) \leq  s(V_{\Gamma_{k+l}};\mb)$.
Thus we obtain
\begin{align*}
\ep(W(\Gamma)_{\bullet};\mb) = \lim_k \dfrac{ s(V_{\Gamma_k};\mb)}k
&= \lim_k \dfrac{ s(V_{\Gamma_{k+l}};\mb)}k  \\
&\geq \lim_k \dfrac{ s(V_{k  \Delta(\Gamma)};\mb)}k =\ep(\Delta(\Gamma);\mb).
\end{align*}

In the general case,
we take an increasing sequence $\Gamma^1 \subset \Gamma^2 \subset \cdots \subset \Gamma$
such that each $\Gamma^i$ is a finitely generated birational subsemigroup of $\Gamma$
and $\bigcup_{i\geq 1} \Gamma^i = \Gamma$.
Then it is easy to show
$\bigcup_i \Delta(\Gamma^i)^{\circ} = \Delta(\Gamma)^{\circ }$.
By Lemmas \ref{interior} and \ref{s=lim s_i}, we have
\[
\ep(\Delta(\Gamma);\mb) = \ep(\Delta(\Gamma)^{\circ };\mb)
= \lim \ep(\Delta(\Gamma^i)^{\circ};\mb) = \lim \ep(\Delta(\Gamma^i);\mb).
\]
Since each $\Gamma^i$ is finitely generated,
we can apply the first part of the proof of this lemma.
Hence we have
\[
\ep(\Delta(\Gamma^i);\mb) = \ep(W(\Gamma^i)_{\bullet};\mb) \leq \ep(W(\Gamma)_{\bullet};\mb).
\]
Thus we obtain $\ep(\Delta(\Gamma);\mb) = \lim \ep(\Delta(\Gamma^i);\mb) \leq  \ep(W(\Gamma)_{\bullet};\mb)$.
\end{proof}

Broadly speaking, the geometrical meaning of Lemma \ref{s for semigroup}
is that Seshadri constants of ample line bundles (on non-normal toric varieties) at very general points do not change by normalizations.
In fact,
when $\Gamma$ is finitely generated,
$(\Proj \bigoplus_k W_{\Delta(\Gamma),k},\calo(1))=(\Proj \C [\Sigma(\Gamma) \cap (\N \times \N^n) ],\calo(1))$
is nothing but the normalization of the toric variety $(\Proj \bigoplus_k W(\Gamma)_{k},\calo(1))=(\Proj \C [\Gamma],\calo(1))$.

\vspace{3mm}
Now we show the key proposition by using degeneration of global sections as \cite{CM}.
Roughly speaking,
this proposition states that $W \subset H^0(X,L)$ generically separates no less jets than $V_{\nu(W)}$.

\begin{prop}\label{key prop}
Let $L$ be a line bundle on an $n$-dimensional variety $X$.
Let $\nu=\nu_{z,>}$ be the valuation map defined by a local coordinate system
$z=(z_1,\ldots,z_n)$ at a smooth point $p \in X$
and a monomial order $>$ on $\N^n$.
Then $s(W;\mb) \geq  s(V_{\nu(W)};\mb)$ holds
for any subspace $W$ of $H^0(X,L)$ and any $\mb \in \rp^r$.
\end{prop}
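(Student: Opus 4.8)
The plan is to reduce to a finite-dimensional problem and then deform the monomial space $V_{\nu(W)}$ back into $W$ using the weight vector furnished by Lemma \ref{existence of alpha}. Fix a real number $t < \s(\nu(W);\mb) = j(V_{\nu(W)};\mb)$; it suffices to prove that $W$ generically separates $\lceil t\mb \rceil$-jets, since then $j(W;\mb)\ge t$ and we may let $t \to \s(\nu(W);\mb)$. Because $V_{\nu(W)}$ generically separates $\lceil t\mb \rceil$-jets and this is the surjectivity of a map into a finite-dimensional space, only finitely many of the spanning monomials are needed: there is a finite subset $T=\{u_1,\ldots,u_M\}\subset\nu(W)$ such that $V_T$ already generically separates $\lceil t\mb \rceil$-jets. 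For each $u_j$ choose $s_j\in W\setminus\{0\}$ with $\nu(s_j)=u_j$, and set $W'=\langle s_1,\ldots,s_M\rangle\subset W$; it is enough to show that $W'$ generically separates $\lceil t\mb \rceil$-jets.

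Next I would pass to the weight vector. Apply Lemma \ref{existence of alpha} to the finite set $S=T$ to obtain $\alpha\in\zp^n$ with the property that $v>u_j$ implies $\alpha\cdot v>\alpha\cdot u_j$. Writing the local expansion $s_j=\sum_v c^{(j)}_v z^v$ under the fixed isomorphism $L_p\cong\calo_{X,p}$, the condition $\nu(s_j)=u_j$ says that $u_j$ is the $>$-minimal exponent, so every other exponent $v$ occurring in $s_j$ satisfies $v>u_j$ and hence $\alpha\cdot(v-u_j)>0$; since $\alpha\in\zp^n$ and $v-u_j\in\Z^n$, in fact $\alpha\cdot(v-u_j)\ge 1$. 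This integrality is what will make the degeneration clean.

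Now I set up the degeneration. Working in a polydisc chart around $p$ with coordinates $z$, consider the scalings $\phi_\lambda(z)=(\lambda^{\alpha_1}z_1,\ldots,\lambda^{\alpha_n}z_n)$ for $\lambda\in\*c$, and choose general points $q_1,\ldots,q_r\in(\*c)^n$ at which $V_T$ separates $\lceil t\mb \rceil$-jets. For small $\lambda$ the points $\phi_\lambda(q_i)$ lie in the chart and are smooth and distinct, and separation of $\lceil t\mb \rceil$-jets by $\{s_j\}$ at $\phi_\lambda(q_i)$ is equivalent, after pulling back by the local isomorphism $\phi_\lambda$ and rescaling each section by the nonzero scalar $\lambda^{-\alpha\cdot u_j}$, to separation at $q_i$ by the functions
\[
g_j^\lambda:=\lambda^{-\alpha\cdot u_j}(s_j\circ\phi_\lambda)=c^{(j)}_{u_j}z^{u_j}+\sum_{v\ne u_j}c^{(j)}_v\,\lambda^{\alpha\cdot(v-u_j)}z^v.
\]
The heart of the argument is the claim that, as $\lambda\to 0$, $g_j^\lambda$ converges to the leading monomial $c^{(j)}_{u_j}z^{u_j}$ together with all its derivatives, uniformly near each $q_i$: absolute convergence of the power series on a small polydisc around the origin, combined with $\alpha\cdot(v-u_j)\ge 1$, dominates the tail by $|\lambda|$ times a convergent constant, and Cauchy's estimates upgrade this to convergence of all jets at $q_i$. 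Consequently the jet-evaluation matrix for $\{g_j^\lambda\}$ at $q_1,\ldots,q_r$ depends continuously on $\lambda$ and at $\lambda=0$ is the jet-evaluation matrix of $V_T$, which has maximal rank because $V_T$ separates there. Since maximality of rank is an open condition, it persists for small $\lambda\ne 0$, so $W'$ separates $\lceil t\mb \rceil$-jets at the smooth points $\phi_\lambda(q_i)$; by the remark following Definition \ref{def of jets separations} this means $W'$ generically separates $\lceil t\mb \rceil$-jets, completing the reduction.

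The main obstacle I expect is precisely this analytic convergence step: because sections of $L$ are only holomorphic, i.e.\ genuinely infinite power series rather than polynomials, the Gr\"obner-style initial degeneration must be justified term by term, and one must verify not merely pointwise but jet-wise convergence, uniformly near the fixed points $q_i$. The integrality $\alpha\cdot(v-u_j)\ge 1$ coming from Lemma \ref{existence of alpha} together with a domination argument on a polydisc inside the radius of convergence are exactly what control this; the finite-dimensional reduction and the openness of the full-rank locus are then routine.
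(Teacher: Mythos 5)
Your proposal is correct and follows essentially the same route as the paper's proof: both degenerate $W$ to the monomial space $V_{\nu(W)}$ (or a finite piece $V_T$ of it) via the one-parameter scaling $z\mapsto(\lambda^{\alpha_1}z_1,\ldots,\lambda^{\alpha_n}z_n)$ with $\alpha$ supplied by Lemma \ref{existence of alpha}, and then conclude by semicontinuity of jet separation as $\lambda\to 0$. The only difference is cosmetic: the paper packages the semicontinuity as surjectivity in a flat family of sheaves along a very general section, while you justify it by continuity of the jet-evaluation matrix in $\lambda$ (via domination of the power-series tail and Cauchy estimates) together with lower semicontinuity of rank, which is an equally valid and if anything more elementary formulation.
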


\begin{proof}
By considering an increasing sequence of finite dimensional subspaces in $W$,
we may assume $\dim W < + \infty$.

Let $\pi : U \rightarrow \C^n$ be the \'etale morphism defined by $z_1,\ldots,z_n$
in an open neighborhood $U \subset X$ of $p$.
By the morphism $\pi$,
we can identify $\calo_{X,p}^{an}$ with $\calo^{an}_{\C^n,0}=\C \{x_1,\ldots,x_n\}$,
where $x_1,\ldots,x_n$ are the coordinates on $\C^n$
such that $\pi^*x_i=z_i$.
Then we can regard $W$ as a subspace of $\C \{x_1,\ldots,x_n\}$
by $W \hookrightarrow L_p \cong \calo_{X,p} \hookrightarrow \C \{x_1,\ldots,x_n\}$.
Note that $\nu$ is extended to
$\calo_{X,p}^{an} \setminus \{0\}=\C \{x_1,\ldots,x_n\} \setminus \{0\} \rightarrow \N^n$ naturally.

Choose and fix $f_u \in \nu^{-1}(u) \cap W$ for each $u \in \nu(W)$.
Then it holds that $V=\bigoplus_{u \in \nu(W)}\C f_u$ because \#$ \nu(W)=\dim W$ (cf.\  \cite{LM} or \cite{BC}).
Since $\nu(W)$ is a finite set,
there exists $\alpha \in \zp^n$ satisfying the following by Lemma \ref{existence of alpha};
if $v > u$ for $u \in \nu(W)$ and $v \in \N^n$, 
it holds that $\alpha \cdot v > \alpha \cdot u$.

The vector $\alpha$ induces the action $\circ$ of $\*c$ on $\C \{x_1,\ldots,x_n\}$
by $t \circ x^u := t^{\alpha \cdot u} x^u$ for $t \in \*c$ and $u \in \N^n$.
For $f_u=\sum_v c_{uv} z^v=\sum_v c_{uv} x^v$ (note we identify $z_i$ and $x_i$),
the regular function
\[
\dfrac{t \circ f_u}{t^{\alpha \cdot u}}=t^{-\alpha \cdot u} \sum_v c_{uv} t^{\alpha \cdot v} x^v
= \sum_v c_{uv} t^{\alpha \cdot v - \alpha \cdot u} x^v
\]
on a neighborhood of $\*c \times \{0\}$ (in $\*c \times \C^n$) 
is naturally extended to a regular function on a neighborhood $\mathcal{U}$ of $\C \times \{0\}$.
Note that $\alpha \cdot v -\alpha \cdot u \geq 0$ if $c_{uv} \not=0$.
We denote the regular function by $F_u$.
Set $\mathcal{W} = \bigoplus_{u \in \nu(W)} \C F_u$.

We prove this proposition only for $r=1$.
When $r >1$,
the proof is similar.
Thus we leave the details to the reader.

Choose a very general section $\sigma$ of the projection $\mathcal{U} \rightarrow \C$ onto the first factor.
Let $\mathcal{I}$ be the ideal sheaf corresponding to
$\sigma(\C)$ on $\mathcal{U} \subset \C \times \C^n$.
For $s \geq 0$,
we consider the map
\[
\phi: \mathcal{W} \otimes_{\C} \C\{t\} \rightarrow \calo^{an}_{\mathcal{U}}
\rightarrow \calo^{an}_{\mathcal{U}}/\mathcal{I}^{s+1}
\]
of flat sheaves over $\C$.
For $ t \in \C$,
we write $W_t:=\mathcal{W} \otimes \C\{t\} |_{\{t\} \times \C^n}$
and
\[
\phi_t:=\phi |_{\{t\} \times \C^n}: W_t \rarw \calo^{an}_{\mathcal{U}}/\mathcal{I}^{s+1} |_{\{t\} \times \C^n}= \calo_{\C^n} / \mathfrak{m}_{\sigma(t)}^{s+1}.
\]
By the flatness,
$\phi_t$ is surjective for very general $t$
if so is $\phi_0$.
Thus
if $W_0$ separates $s$-jets at $\sigma(0)$,
then $W_t$ also separates $s$-jets at $\sigma(t)$ for very general $t$.
Since $\sigma$ is a very general section,
we have
\[
s(W_t;\mb) \geq s(W_0;\mb)
\]
for $t$ in a neighborhood of $0$.
Since there is a natural identification
of $W_t$ and $W$ for $t \in \*c$ by the action $\circ$,
we have $s(W;\mb) = s(W_t;\mb)$.
On the other hand,
$W_0=V_{\nu(W)} \subset \C[x_1,\ldots,x_n]$
since $F_u=c_{uu}x^u + t \cdot (\text{higher term})$ for some $c_{uu} \not= 0$.
From these inequalities,
we have $s(W;\mb) \geq  s(V_{\nu(W)};\mb)$.
\end{proof}

Now we can show the main theorem easily.

\begin{thm}[=Theorem \ref{main}]\label{main thm}
Let $\w.$ be a birational graded linear series associated to a line bundle $L$
on an $n$-dimensional variety $X$.
Fix a local coordinate system $z=(z_1,\ldots,z_n)$ on $X$ at a smooth point
and a monomial order $>$ on $\N^n$.
Then $\ep(\w.;\mb) \geq \ep(\Delta_{z,>}(\w.);\mb)$ holds for any $r \in \zp$ and $\mb \in \rp^r$.
\end{thm}

\begin{proof}

Let $\Gamma := \Gamma_{\w.,z,>} \subset \N \times \N^n$ be the semigroup defined by $\w.,z$, and $>$.
Then we have $\Gamma_k=\nu(W_k) \subset \N^n$ and $\Delta(\Gamma) = \Delta_{z,>}(\w.)$ by definition.
By Proposition \ref{key prop},
it holds that
\[
\ep(\w.;\mb)= \lim \dfrac{s(W_k;\mb)}k \geq
\lim \dfrac{ s(V_{\nu(W_k)};\mb)}k=  \lim \dfrac{ s(V_{\Gamma_k};\mb)}k.
\]
Since $\Gamma$ is birational by Lemma \ref{cond(B) is enough},
we have
\[
\lim \dfrac{ s(V_{\Gamma_k};\mb)}k= \ep(W(\Gamma)_{\bullet};\mb)=\ep(\Delta(\Gamma);\mb)
=\ep(\Delta_{z,>}(\w.);\mb)
\]
from Lemma \ref{s for semigroup}.
Thus $\ep(\w.;\mb) \geq \ep(\Delta_{z,>}(\w.);\mb)$ holds.
\end{proof}

\begin{rem}
The inequality in Theorem \ref{main thm} is not equality in general.
Let $L$ be an ample line bundle
on a projective variety $X$.
Let $z$ be a local coordinate system at $p$
and $E$ the prime divisor on $X$ defined by the first coordinate $z_1$ of $z$ around $p$.
If $>$ is the lexicographic order,
$\Delta_{z,>}(L)$ is contained in $[0,a] \times \R^{\dim X-1}$,
where $a= \sup \{ \,t > 0 \, | \, L -t E \text{ is effective}\}$ (cf.\ \cite{LM}).
Hence $ \ep(\Delta_{z,>}(L);1 ) \leq a$ holds (cf.\ \cite[Theorem 3.6]{It}).
If we choose $z$ so that $E \in |mL|$ for $m \gg 0$,
we have $ \ep(\Delta_{z,>}(L);1 ) \leq a = 1/ m < \ep(L;1)$.
\end{rem}

\begin{rem}
See \cite[Remark 5.5]{LM} for another relation between Okounkov bodies and Seshadri constants,
though the relation is not written explicitly there.
The relation also holds for a birational graded linear series.
\end{rem}

\end{document}